\def\namedlabel#1#2{\begingroup
	\def\@currentlabel{#2}
	\label{#1}\endgroup
}
\newcommand{\HH}{\mathbb{H}}
\newcommand{\NN}{\mathbb{N}}
\newcommand{\RR}{\mathbb{R}}
\newcommand{\ZZ}{\mathbb{Z}}
\newcommand{\cB}{\mathcal{B}}
\newcommand{\cL}{\mathcal{L}}
\newcommand{\cT}{\mathcal{T}}
\newcommand{\pdv}[2]{\frac{\partial #1}{\partial #2}}
\newcommand{\Isom}{\operatorname{Isom}}
\newcommand{\sgn}{\operatorname{sgn}}
\newtheorem{theorem}{Theorem}[section]
\newtheorem{corollary}[theorem]{Corollary}
\newtheorem{lemma}[theorem]{Lemma}
\newtheorem{proposition}[theorem]{Proposition}
\theoremstyle{definition}
\newtheorem{definition}[theorem]{Definition}
\newtheorem{remark}[theorem]{Remark}
\title{Vertex-minimal hyperbolic origami 2-torus}
\author{Zhengyu Zou}
\date{\today}
\begin{document}
\maketitle

\begin{abstract}
    We show that there exists a geodesic triangulation $\cT$ of a hyperbolic genus 2 surface $\Sigma_2$ with 10 vertices and an isometric polyhedral embedding $S: \Sigma_2 \hookrightarrow \HH^3$ that sends the triangles in $\cT$ to geodesic triangles in $\HH^3$. We call this type of embedding a \emph{hyperbolic origami 2-torus}. Since 10 is the combinatorially minimum number of vertices required to triangulate a genus 2 surface, this paper settles the question of minimum number of vertices required to obtain a hyperbolic origami 2-torus. 
\end{abstract}

\section{Introduction} \label{sec:intro}

\subsection{Context and Problem Statement}

A \emph{hyperbolic 2-torus} $\Sigma_2$ is a quotient of $\HH^2$ by the action of a surface group $\Gamma < \Isom^+(\HH^2)$ such that $\HH^2 / \Gamma$ is a genus 2 surface. Here, $\Gamma$ is isomorphic to the group with presentation $\langle a, b, c, d \, | \, [a,b][c,d] \rangle$, where $[a,b] = aba^{-1}b^{-1}$ denotes the commutator of the two elements, and the action of $\Gamma$ on $\HH^2$ is free and properly discontinuous. Every hyperbolic 2-torus $\Sigma_2$ inherits the structure of a Riemannian manifold of constant sectional curvature -1 from $\HH^2$. A \emph{hyperbolic origami 2-torus} is a map $S: (\Sigma_2, \cT) \hookrightarrow \HH^3$, where $\cT$ is a geodesic triangulation of $\Sigma_2$, and $S$ restricts to isometries on the triangles of $\cT$. Concretely, we think of a hyperbolic origami 2-torus $S$ as gluing hyperbolic triangles together in $\HH^3$, where the cone angles at each vertex are $2\pi$.

The study of graph embeddings and triangulations of closed surfaces can be dated back to the late 19th century \cite{Heawood1890}. The minimum number of vertices $n_0$ to triangulate a closed surface of Euler characteristic $e$ is given by 
\begin{equation} \label{eqn:minimum vertex to triangulate surface}
    n_0 \geq \bigg\lfloor \frac{7 + \sqrt{49 - 24 e}}{2} \bigg\rfloor 
\end{equation}
Equation \eqref{eqn:minimum vertex to triangulate surface} is proved by Ringel \cite{Ringel1955} for the non-orientable cases and later by Jungerman and Ringel \cite{JR1980} for the orientable cases. However, the bound in Equation \eqref{eqn:minimum vertex to triangulate surface} is not tight for the 2-torus. In \cite{HUNEKE1978258}, Huneke showed that the minimum number of vertices required to triangulate a 2-torus is 10, in contrast with a looser lower bound of 8 from Equation \eqref{eqn:minimum vertex to triangulate surface}. In 2008, Lutz \cite{Lutz2008} enumerated all 865 isomorphism classes of 10-vertex triangulations of the 2-torus. 

Geometers were interested in whether these triangulated surfaces admit piecewise-affine embeddings into $\RR^3$ that are affine on the triangle pieces \cite{bokowski1987new, brehm1987maximally, BokowskiBrehm1989, burago1995isometric, HougardyLutzZelke2006}. These embeddings are called \emph{polyhedral embeddings}. A polyhedral embedding of a 10-vertex triangulation of the 2-torus was first discovered by Brehm \cite{Brehm1981}. Later, Hougardy, Lutz, and Zelke \cite{HougardyLutzZelke2007} developed a computer algorithm to enumerate polyhedral embeddings of all 10-vertex triangulations of the 2-torus. 
However, a fundamental problem remains unsolved: if we take into account the hyperbolic structures on a 2-torus, these polyhedral embeddings fail to preserve them. First, it's impossible to map hyperbolic triangles isometrically to geodesic triangles in $\RR^3$ due to the Gauss--Bonnet theorem. Thus, it is more natural to consider embeddings of $(\Sigma_2, \cT)$ into $\HH^3$, where the images of triangles in $\cT$ are isometric geodesic triangles in the hyperbolic 3-space. 

There has been significant progress on polyhedral embeddings of the flat torus. See \cite{Zalgaller2000, BernHayes2008,segerman2016visualizing,LazarusTallerie2024, tsuboi2024origami}. A \emph{flat polyhedral torus} is a piecewise-linear embedding of a triangulated torus $T^2$ into $\RR^3$ that restricts to linear isometries on the triangles, where the triangles in $T^2$ come from a triangular tiling of the universal cover $\RR^2$. A very recent work of Schwartz \cite{schwartz2025vertexminimalpapertori} showed that a flat polyhedral torus of 7 vertices doesn't exist, but one can obtain one with 8 vertices with 2-fold symmetry. This settled the question of the minimum number of vertices required for a flat polyhedral torus. 
In this paper, we prove that the minimum number of vertices required for a hyperbolic origami 2-torus is 10. 

\subsection{The Candidate Model 2-Torus}
We state the main result of this paper.

\begin{theorem} \label{thm:flat 2-torus for all vertices}
    There exists a hyperbolic 2-torus $\Sigma_2$ along with a 10-vertex geodesic triangulation $\cT$ and a hyperbolic origami 2-torus $S: (\Sigma_2, \cT) \hookrightarrow \HH^3$.
\end{theorem}

\begin{figure}[ht]
    \centering
    \includegraphics[width=0.3\columnwidth]{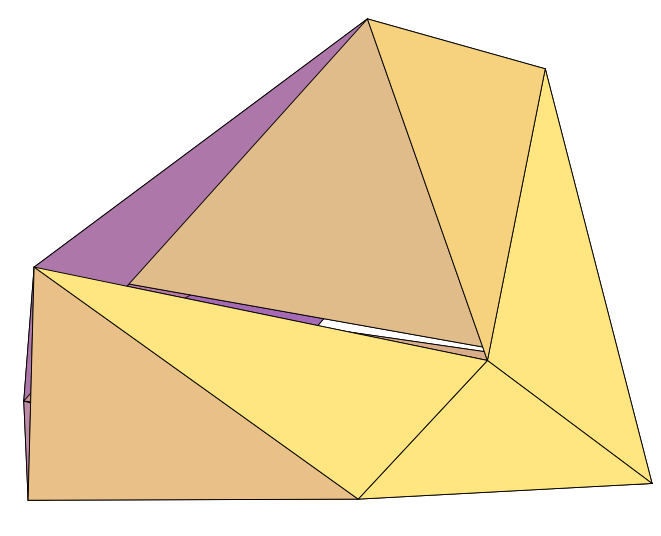}
    \hspace{20pt}
    \includegraphics[width=0.3\columnwidth]{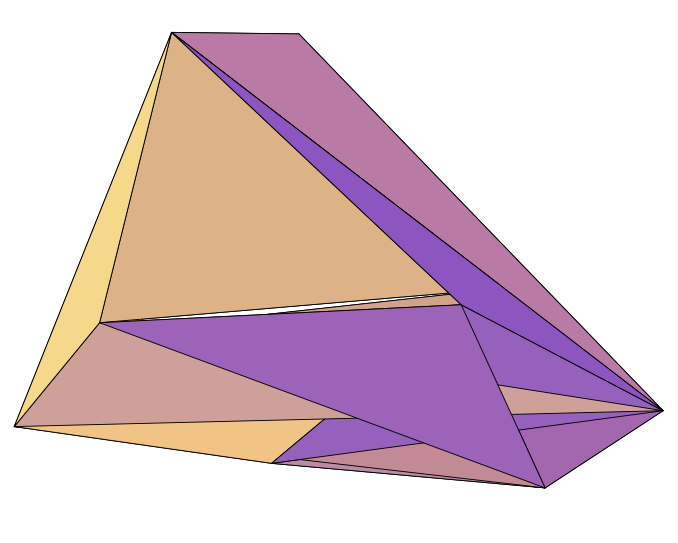}
    \caption{The candidate 2-torus $\hat S$ in the Beltrami--Klein model $\HH^3$, shown from opposite sides of a ``hole'' passing through it. Triangles are shaded along a yellow--purple spectrum according to the $y$-coordinates of their centroids, ordered from highest to lowest.}
    \label{fig:surface}
\end{figure}

\begin{figure}[ht]
    \centering
    
    \def\svgwidth{0.85\columnwidth}
    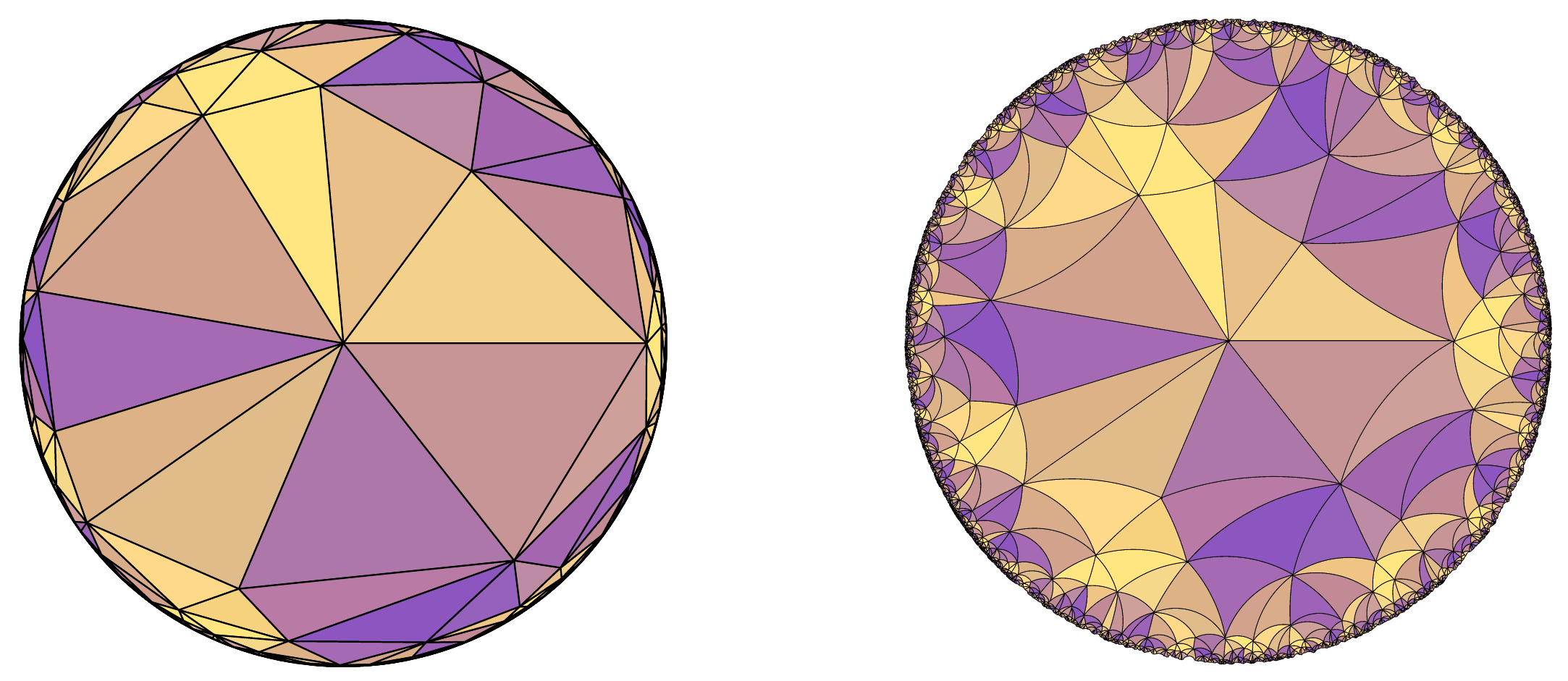

    \caption{Left: The induced triangle tiling from $\cT$ in the Beltrami--Klein model of $\HH^2$. Right: The tiling in the Poincar\'e disk model. The colors of the triangles are derived from Figure \ref{fig:surface}.}
    \label{fig:univ cover}
\end{figure}

The information of a hyperbolic origami 2-torus is captured entirely by the triangulation $\cT$ and the image of the vertices of $\cT$ under the map $S: (\Sigma_2, \cT) \rightarrow \HH^3$, as long as the cone angles of the vertices of $\cT$ are all $2\pi$. We remark that Theorem \ref{thm:flat 2-torus for all vertices} implies the existence of hyperbolic origami 2-tori with $n$ vertices for all $n \geq 10$; one simply uses the 10-vertex embedding and adds vertices to the interior of the triangles in $\cT$ to obtain triangulations with more vertices. 

Consider the following 10-vertex triangulation $\cT = (V, E, F)$ with vertices in $V$, edges in $E$, and oriented triangles in $F$. We index $V$ by $\{0, \ldots, 9\}$. 
We list the degree sequence of the 10 vertices in $V$:
\begin{equation*}
    (d_0, d_1, d_2, d_3, d_4, d_5, d_6, d_7, d_8, d_9)
    = 
    (9, \, 8, \, 8, \, 8, \, 8, \, 7, \, 7, \, 6, \, 6, \, 5) .
\end{equation*}

We also list out the indices of the 24 oriented triangles in $F$ in the form of 3-tuples. A tuple $(i, j, k)$ indicates an oriented face joined by vertices $i, j, k$. 
\begin{equation} \label{eqn:triangles in the triangulation}
    \begin{matrix}
        (0, 1, 7) & (0, 2, 1) & (0, 3, 6) & (0, 4, 9) & (0, 5, 3) & (0, 6, 4) \\
        (0, 7, 8) & (0, 8, 5) & (0, 9, 2) & (1, 2, 4) & (1, 3, 7) & (1, 4, 6) \\
        (1, 5, 8) & (1, 6, 5) & (1, 8, 3) & (2, 3, 8) & (2, 6, 3) & (2, 7, 4) \\
        (2, 8, 7) & (2, 9, 6) & (3, 4, 7) & (3, 5, 4) & (4, 5, 9) & (5, 6, 9)
    \end{matrix}
\end{equation}

The proof of Theorem \ref{thm:flat 2-torus for all vertices} consists of two steps. First, we find a polyhedral 2-torus $\hat S$ that is ``robustly embedded'' and ``almost isometric.'' Next, we argue that an isometric one exists by perturbing the coordinates of $\hat S$. Figure \ref{fig:surface} shows the polyhedral 2-torus $\hat S$ embedded in the Beltrami--Klein model of the hyperbolic 3-space (recall that in the Klein model, geodesics are straight lines and geodesic triangles are Euclidean triangles). In Figure \ref{fig:univ cover}, we draw the induced tiling of the universal cover $\HH^2$ by the ``almost geodesic'' triangulation on $\Sigma_2$ by ``almost isometrically'' lifting the triangles in $\hat S$ to $\HH^2$ and gluing them together. Since $\hat S$ is not a hyperbolic origami 2-torus, the picture of the tiling in the universal cover is only meant to be an approximation up to some error (about $10^{-28}$), but the error is too small to be discernible by human eyes so that visually the picture looks reasonable. See Figure \ref{fig:slice_xy}, \ref{fig:slice_xz}, \ref{fig:slice_yz} for the intersection of $\hat S$ with the $xy$-plane, $xz$-plane, and $yz$-plane. 

\begin{figure}[ht]
    \centering
    
    \def\svgwidth{0.4\columnwidth}
    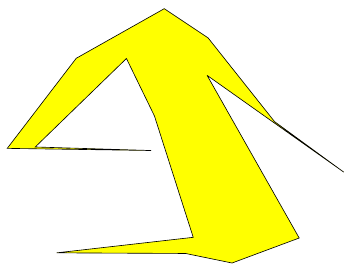

    \caption{The intersection of $\hat S$ with the $xy$-plane, which is the boundary of a simply-connected component colored in yellow.}
    \label{fig:slice_xy}
\end{figure}

Here we give the exact coordinates of the vertices $\hat X_0, \ldots, \hat X_9$ of our candidate 2-torus $\hat S$ in the Beltrami--Klein model:
\begin{equation} \label{eqn:coordinates of S hat}
    \begin{matrix}
        0.7315 & 0.0202 & 0.28688022781563440615364787558404 \\
        -0.316 & 0.5792 & -0.2252919753182150895621576631383 \\
        0.3426 & -0.592 & -0.22851917827575874828458055465199 \\
        -0.4323 & -0.592 & -0.23272863894943839798113773793091 \\
        -0.7303 & 0.04 & -0.22959077803009316431117678104662 \\
        0.1464 & 0.6149 & 0.13588682780065943976868637469759 \\
        -0.5154 & 0.0395 & 0.46102777383206591809202059407883 \\
        0.6649 & -0.1156 & -0.22651115997910956793325851442687 \\
        0.152 & 0.2539 & -0.23985732806740791506457015735734 \\
        -0.03 & 0.0606 & 0.64396456614136038886542316026992
    \end{matrix}    
\end{equation}

\begin{figure}[ht]
    \centering
    
    \def\svgwidth{0.5\columnwidth}
    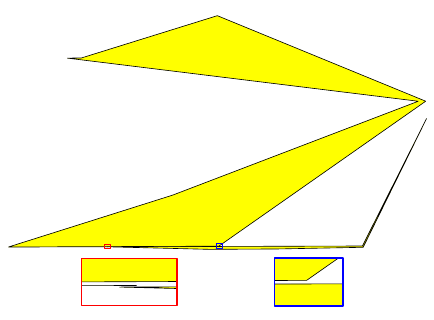

    \caption{The intersection of $\hat S$ with the plane $y = 0$, which is the boundary of two simply-connected components colored in yellow. The boxes contain the magnified view of their respective regions in the original slice.}
    \label{fig:slice_xz}
\end{figure}

We explain how we chose the triangulation $\cT$ and obtained these coordinates. We start with all triangulations $\cT_n = (V_n, E_n, F_n)$ and polyhedral embeddings $S_n: (\Sigma_2, \cT_n) \hookrightarrow \RR^3$ from \cite{HougardyLutzZelke2007} for all 865 triangulations $n = 1, \ldots, 865$, where $S_n$ maps vertices of $\cT_n$ to the integer lattice $\ZZ^3 \cap [0,4]^3$. First, we apply a translation and scale the vectors so that the center-of-mass of the vertices is at the origin, and that $\max_{X \in V_n} \| X \| = \frac{1}{2}$. This ensures that $S_n(\Sigma_2) \subset B_1(0)$, so we can treat $S_n$ as an embedding into the Klein model of $\HH^3$. Then, for each $n$, we compute the hyperbolic angles of all triangles in $S_n$ and all the cone angles. We found that $S_{265}$ has the minimal cone angle difference to $2\pi$ among all other embeddings. After that, we apply a hill-climbing algorithm by randomly perturbing the coordinates of $S_{265}$ to minimize the objective function 
\begin{equation*}
    \cL(S) = \max_{i = 0, \ldots, 9} | \theta_i(S) - 2 \pi |
\end{equation*}
where $\theta_i(S)$ is the cone angle of $S$ at vertex $i$. This gives us a surface $S'_{265}$ with $\cL(S'_{265}) \leq 10^{-14}$. Let $z = (z_0, \ldots, z_9)$ denote the vector of $z$-coordinates of $S$. We implement a high-precision version of Newton's method on the multivariable function 
\begin{equation} \label{eqn:Theta definition}
    \Theta(z) = \left( \theta_0(z) - 2\pi, \theta_1(z) - 2\pi, \ldots, \theta_9(z) - 2\pi \right) .
\end{equation}
We obtain the coordinates of $\hat S$ by truncating the $z$-coordinates at $10^{-32}$. 

\begin{figure}[ht]
    \centering
    
    \def\svgwidth{0.5\columnwidth}
    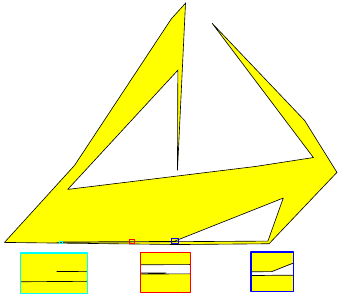

    \caption{The intersection of $\hat S$ with the plane $x = 0$, which is the boundary of a region that is not simply-connected. Again, the boxes contain the magnified view of their respective regions in the original slice.}
    \label{fig:slice_yz}
\end{figure}

\subsection{A 12-Vertex Model with 2-Fold Symmetry}

The candidate model $\hat S$ is rather unsatisfactory in the sense that it doesn't admit any symmetry. On the other hand, the 8-vertex paper torus model from \cite{schwartz2025vertexminimalpapertori} has a 2-fold symmetry that corresponds to a hyperelliptic involution, where the four fixed points are midpoints of 4 edges. Since every flat torus admits a hyperelliptic involution, Schwartz's construction effectively reduced the dimension of the search space to find a family of paper torus in the hope of realizing every flat structure. 
Very recently, Doyle and Schwartz \cite{doyle2025collapsibilitynearuniversalityvertex} found a family of 8-vertex flat polyhedral tori that realize any flat torus without reflection
symmetry, which constitutes a dense subset in the moduli space of flat tori. 

\begin{figure}
    \centering
    \includegraphics[width=0.3\columnwidth]{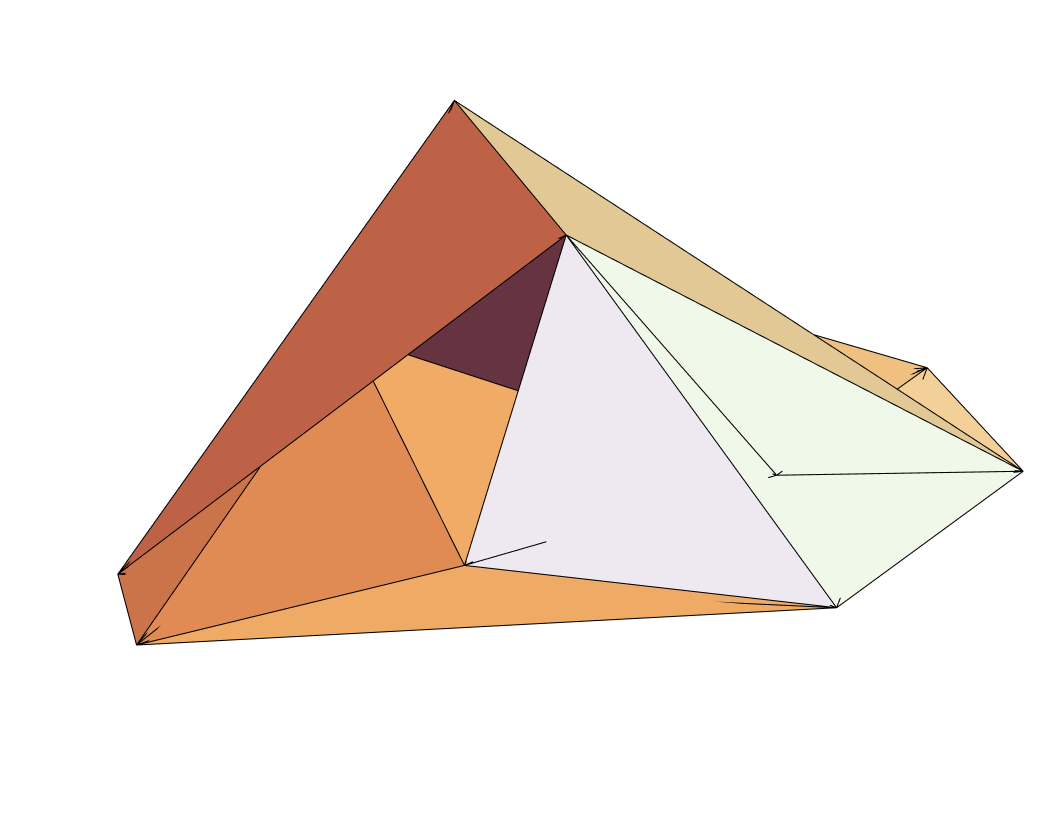}
    \includegraphics[width=0.3\columnwidth]{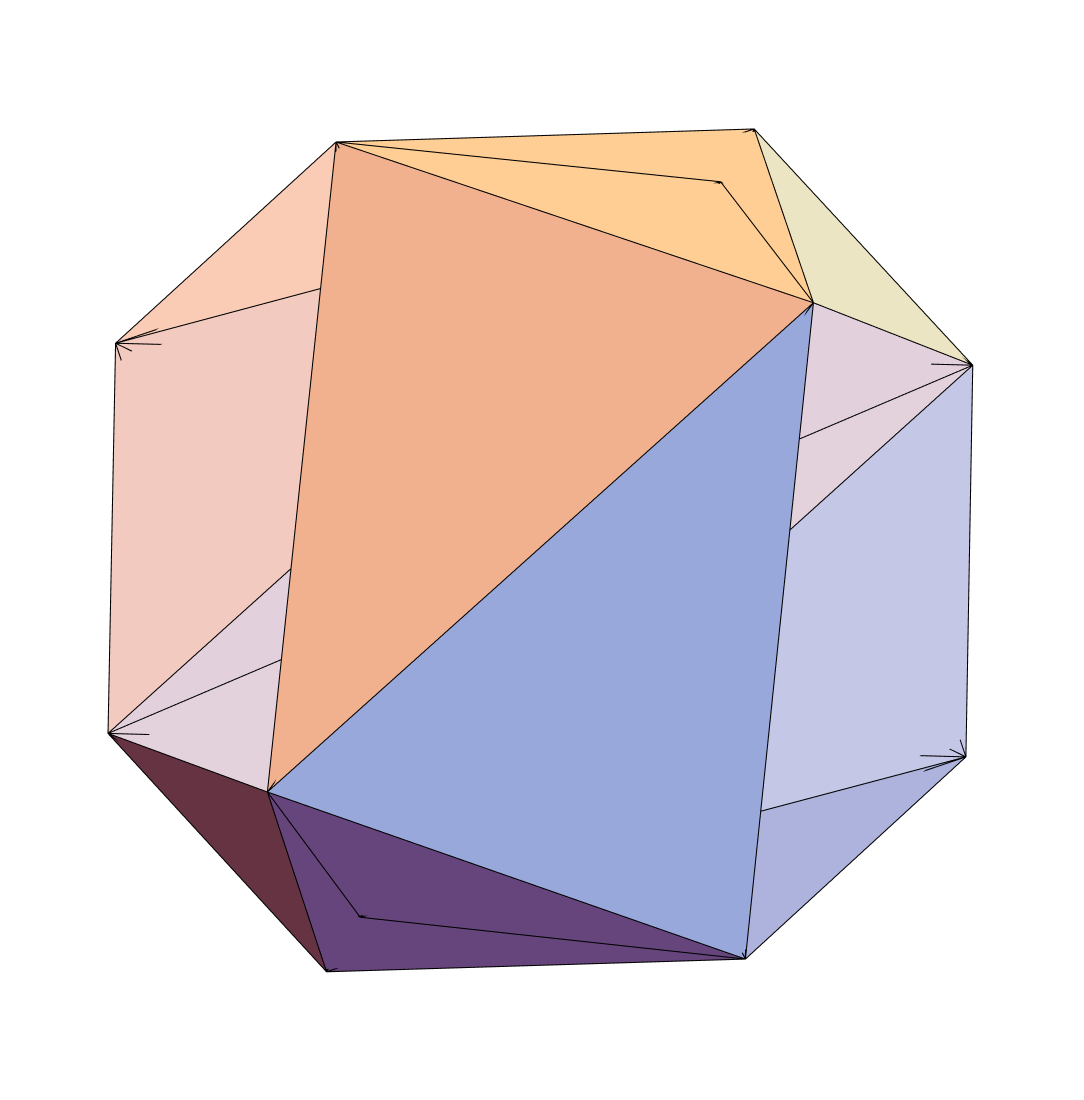}
    \includegraphics[width=0.3\columnwidth]{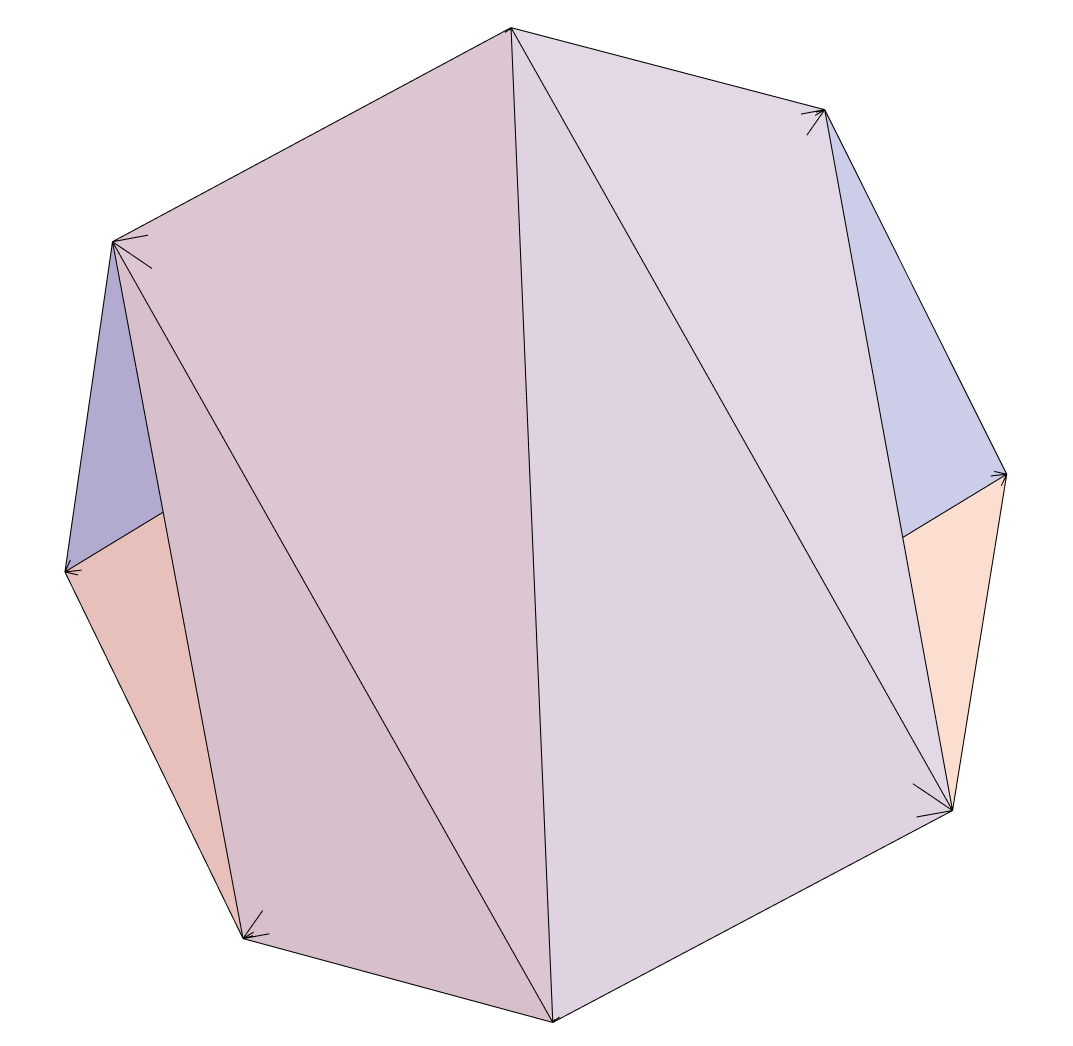}
    \caption{The embedding $\hat S_{12}$ with 2-fold rotational symmetry about the $z$-axis. Left: front view from the positive $x$-direction. Middle: bird's-eye view from the positive $z$-direction. Right: view from the negative $z$-direction.}
    \label{fig:12-vertex view}
\end{figure}

Motivated by this finding, I searched for a hyperbolic origami 2-torus with 2-fold symmetry that corresponds to a hyperelliptic involution. One first observes that if the order-2 symmetry acts freely on the set of vertices and fixes 6 points, then it must fix midpoints of 6 edges, which requires at least 12 vertices. I started by finding piecewise-linear embeddings of a 12-vertex triangulation of the 2-torus along with a candidate model with 2-fold symmetry into $\RR^3$. A natural starting point is to consider the 7-regular triangulations, which has the largest symmetry group. I first checked whether the 7-regular triangulations of the 2-torus from \cite{DattaUpadhyay+2006+1011+1025} admit piecewise-linear embeddings into $\RR^3$ with 2-fold symmetry. Then, I applied the same procedure as before to numerically approximate a hyperbolic origami 2-torus with 12 vertices and 2-fold symmetry. This led me to the following 12-vertex triangulation $\cT$ with vertices indexed by $\{0, \ldots, 11\}$. The indices of the 28 oriented triangles in $\cT$ are listed below:
\begin{equation} \label{eqn:12-vertex triangulation triangles}
    \begin{matrix}
        (0,1,5) & (0,4,8) & (0,5,11) & (0,6,1) & (0,7,6) & (0,8,7) & (0,11,4) \\
        (1,2,7) & (1,6,2) & (1,7,8)  & (1,8,9) & (1,9,5) & (2,3,7)  & (2,6,10) \\
        (2,8,3) & (2,9,8) & (2,10,9) & (3,4,9) & (3,8,4) & (3,9,10) & (3,10,11) \\
        (3,11,7) & (4,5,9) & (4,10,5) & (4,11,10) & (5,6,11) & (5,10,6) & (6,7,11)
    \end{matrix}
\end{equation}

Notice that $\cT$ admits a $\ZZ/12\ZZ$ symmetry generated by the map $\sigma: i \mapsto (i + 1) \mod 12$. The hyperelliptic involution is given by $\sigma^6$. Using this triangulation, I found a polyhedral embedding $\hat S_{12}: (\Sigma_2, \cT) \hookrightarrow \HH^3$ that satisfies $R \circ \hat S_{12} = \hat S_{12}$, where $R \in \Isom^+(\HH^3)$ is a rotation by $\pi$ about the $z$-axis. I checked that the cone angles of $\hat S_{12}$ are bounded away from $2\pi$ by $10^{-10}$. I provide the coordinates of $\hat S_{12}$ in $\S$\ref{subsec:12-vertex coords}. 

Figure \ref{fig:12-vertex view} contains the visualization of the embedding $\hat S_{12}$ in the Klein model. It is very curious that when we use the Klein model to visualize $\hat S_{12}$, its appearance resembles a ``double-deck pup tent'' (the \emph{pup tent} refers to the paper tori construction by Schwartz in \cite{schwartz2025vertexminimalpapertori}). This hints at the possibility to generalize this construction.

\begin{figure}
    \centering
    
    \def\svgwidth{0.8\columnwidth}
    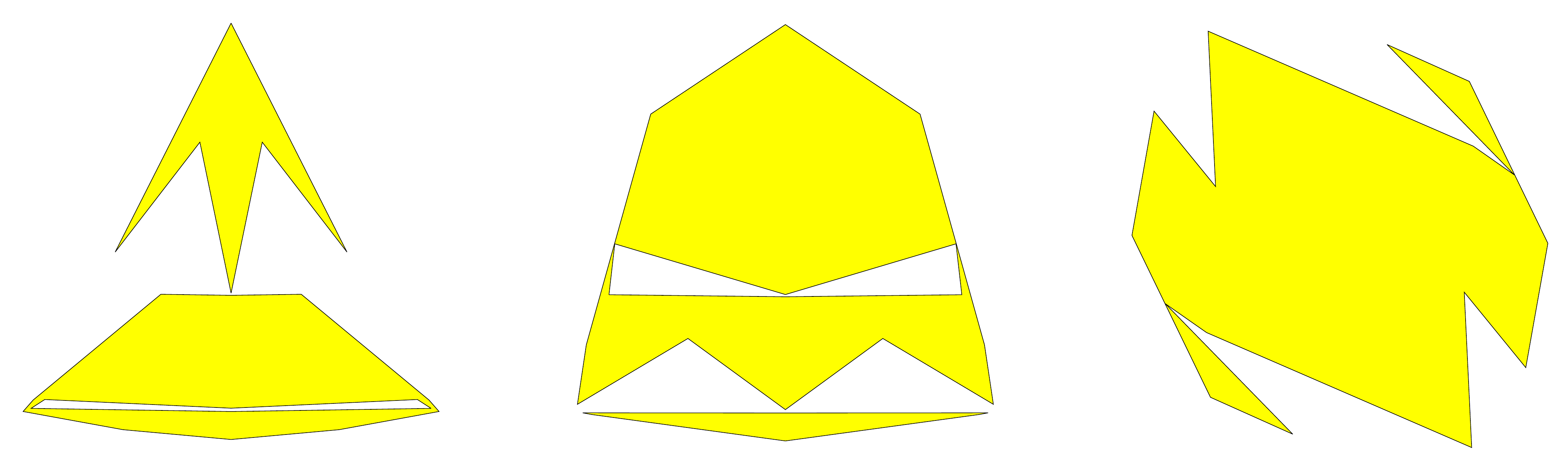

    \caption{The intersection of the 12-vertex model with the plane $x = 0$ (left), $y = 0$ (middle), and $z = -0.05$ (right). One can clearly see the six midpoints of edges $(i, i+6)$ that are fixed by the rotational symmetry from the $x = 0$ and $y = 0$ slices.}
    \label{fig:12-vertex slices}
\end{figure}

The midpoints of the six edges $(i, i+6)$ all lie on the $z$-axis, which can be seen by slicing the embedding with planes that contains the $z$-axis. In Figure \ref{fig:12-vertex slices}, we show the intersection of $\hat S_{12}$ with the planes $x = 0$, $y = 0$, and $z = -0.05$. One can clearly see the six fixed points of the hyperelliptic involution from the $x = 0$ and $y = 0$ slices. Finally, in Figure \ref{fig:12-vertex universal cover}, we show the induced triangle tiling of the universal cover $\HH^2$ from $\hat S_{12}$ in the Poincar\'e disk model. In this visualization, the pairs of triangles $(i, j, k)$ and $(i+6, j+6, k+6)$ are colored the same way to reflect the 2-fold symmetry.

\begin{figure}
    \centering
    
    \def\svgwidth{0.4\columnwidth}
    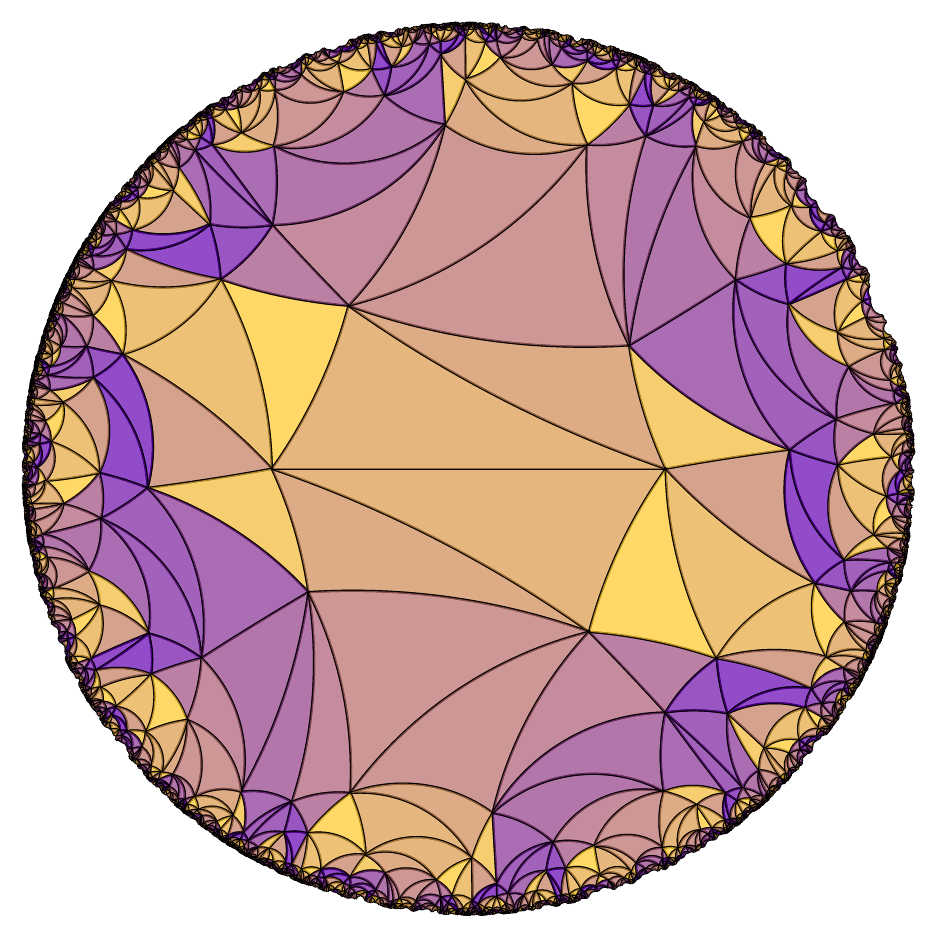

    \caption{The isometrically induced 7-regular triangle tiling in the universal cover $\HH^2$, visualized in the Poincar\'e disk model. The vertices $z_0$ and $z_6$ are fixed on the real axis, with $\Re(z_0) < 0$ and $z_6 = -z_0$.}
    \label{fig:12-vertex universal cover}
\end{figure}

\subsection{Paper Organization}

This paper is organized as follows:
In $\S$\ref{sec:background}, we remind the readers of the properties of the Klein model and explain how we incorporate floating-point arithmetic into the proofs. 
In $\S$\ref{sec:almost flat}, we show that the cone angles of $\hat S$ differ from $2\pi$ by at most $10^{-28}$, and that $\hat S$ stays embedded after perturbing its $z$-coordinates by at most $10^{-7}$. 
In $\S$\ref{sec:flat}, we analyze the Jacobian matrix of $\Theta(z)$ to show that there exists a hyperbolic origami 2-torus $S$ near $\hat S$. 

An ongoing project is to generalize the 12-vertex construction to a family of hyperbolic origami 2-tori and check which hyperbolic structures can be realized.

\subsection{Accompanying Programs}

I have written programs in both Python and Mathematica to visualize the candidate models and slicing them with planes. Readers can access them via the following link: 

\begin{center}
    \href{https://github.com/zzou9/hyperbolic-origami}{\textbf{https://github.com/zzou9/hyperbolic-origami}}
\end{center}

\subsection{Acknowledgements}

This project is supported by an N.S.F.\ Research Grant DMS-2505281. I would like to thank my advisor, Richard Schwartz, for inspiring me to work on this problem. Much of the methodology used in this paper is inspired by \cite{schwartz2025vertexminimalpapertori}. I would like to thank Peter Doyle, Fabian Lander, Alba M\'alaga, Stepan Paul, Steve Trettel, and Bena Tshishiku for helpful discussions on this topic. I would like to thank Frank Lutz for his manifolds webpage \cite{Lutz2017_manifoldpage} and Stefan Hougardy, Frank H.\,Lutz, Mariano Zelke for keeping a database of piecewise-affine embedded 2-tori with vertex-minimal triangulations, which helped me find prototypes for the candidate 2-torus in this paper. 

\section{Background} \label{sec:background}

\subsection{The Beltrami--Klein Model}

Denote by $\HH^n$ the hyperbolic $n$-space -- the simply connected Riemannian $n$-manifold of constant sectional curvature -1. We mainly work with $\HH^3$ in this paper. Here we provide the explicit construction known as the \emph{Beltrami--Klein model} of $\HH^3$. Consider the open unit Euclidean ball $B_1(0) \subset \RR^3$. For each $X \in B_1(0)$, we write $a_X = 1 - \|X\|^2$. The Riemannian manifold $\HH^3$ is defined as the manifold $B_1(0)$ along with the Riemannian metric $g$ given by 
\begin{equation} \label{eqn:klein 3 dot product}
    g|_{X} = \left( \frac{(dx^1)^2 + (dx^2)^2 + (dx^3)^2}{a_X} + \frac{(x^1 dx^1 + x^2 dx^2 + x^3 dx^3)^2}{a_X^2} \right)
\end{equation}
Let $V, W$ be two arbitrary vectors in $\RR^3$, which we can identify with $T_X \HH^3$. Then, $g|_X$ can be expressed as 
\begin{equation} \label{eqn:klein 3 dot product vectors}
    g|_X (V, W)
    = \frac{\langle V, W \rangle}{a_X} + \frac{\langle X, V \rangle \, \langle X, W \rangle}{a_X^2} 
    = \frac{a_X \langle V, W \rangle + \langle X, V \rangle \langle X, W \rangle}{a_X^2}.
\end{equation}
where $\langle \cdot , \cdot \rangle$ denotes the Euclidean dot product. We will write $\langle V, W \rangle_X = g|_X (V, W)$ and $\|V\|_X = \sqrt{g|_X(V, V)}$ for the norm of $V \in T_X\HH^n$. 

Since geodesics in $\HH^3$ are straight lines, a totally geodesic triangle in $\HH^3$ with vertices $(X, Y, Z)$ is the same as the Euclidean triangle in the unit ball. We may compute the hyperbolic angle $\theta$ at $X$ as follows: Let $V = Y - X$ and $W = Z- X$. We may identify $V$ and $W$ with velocity vectors in $T_X\HH^3$ of the geodesic from $X$ to $Y$ and $X$ to $Z$ respectively. Consider the following function: 
\begin{equation} \label{eqn:hyperbolic Klein dot product}
    A(X, Y, Z)
    = \frac{\langle V, W \rangle_X^2}{\langle V, V \rangle_X \cdot \langle W, W \rangle_X} 
    = \frac{(a_X \langle V, W \rangle + \langle X, V \rangle \langle X, W \rangle)^2}{(a_X \langle V, V \rangle + \langle X, V \rangle^2) (a_X \langle W, W \rangle + \langle X, W \rangle^2)}.
\end{equation}
Denote by $\sigma = \sgn(\langle V, W \rangle_X)$ the sign of $\langle V, W \rangle_X$. 
It follows that 
\begin{equation} \label{eqn:hyperbolic Klein angle}
    \theta 
    = \arccos \left( \frac{\langle V, W \rangle_X}{\|V\|_X \, \|W\|_X} \right)
    = \arccos\left( \sigma \sqrt{A(X, Y, Z)} \right) ; \hspace{10pt}
    \cos\theta = \sigma \sqrt{A(X, Y, Z)}.
\end{equation}

\subsection{Remark on Floating-Point Arithmetic}

The proofs in this paper make use of computer floating-point arithmetic. To perform floating-point arithmetic up to high precision ($10^{-400}$ for our purposes), we use the Python package mpmath, which stores floating-point numbers of any designated precision and has built-in methods of basic arithmetic operations. When we claim that $\frac{a}{b} < \frac{c}{d}$ for floating-point numbers $a, b, c, d$, we implicitly carry out the computation of $ad - bc$ in mpmath and check that the result is negative. We sometimes find equivalent fractions $\frac{a'}{b'} = \frac{a}{b}$ and $\frac{c'}{d'} = \frac{c}{d}$ with $a', b', c', d' \in \ZZ$ to simplify the computation into integer arithmetic. 

For finding the bounds on the value of $e^a$ where $a$ is a floating-point number, see $\S$\ref{subsec:transcendental functions}.

\section{The Candidate 2-Torus} \label{sec:almost flat}

In this section, we show that the cone angles of our candidate 2-torus $\hat S$ are very close to $2\pi$. We then show that if we apply perturbation to the $z$-coordinates of $\hat S$, the resulting 2-torus stays embedded as long as the perturbation is small enough. 

Let $\cT$ be a triangulation of the 2-torus $\Sigma_2$. A \emph{hyperbolic polyhedral 2-torus} is an embedding $S: (\Sigma_2, \cT) \hookrightarrow \HH^3$ such that the image of the triangles in $\cT$ under $S$ are geodesic triangles. A hyperbolic origami 2-torus is then a hyperbolic polyhedral 2-torus where the cone angle of each vertex is $2\pi$. 

\subsection{Robust Flatness}
\label{subsec:flatness}

We start by defining flatness of a hyperbolic polyhedral 2-torus. 

\begin{definition}
    We say that a hyperbolic polyhedral 2-torus $S$ is \emph{$\varepsilon$-flat} if $\max_i |\theta_i - 2\pi| < \varepsilon$, where $i \in \{0, \ldots, 9\}$ and $\theta_i$ is the cone angle of vertex $i$. 
\end{definition}

\begin{proposition} \label{prop:flatness of S hat}
    $\hat S$ is $10^{-28}$-flat. 
\end{proposition}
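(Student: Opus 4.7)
The plan is to reduce the statement to a bounded high-precision arithmetic verification guided by the explicit formulas in Section~\ref{sec:background}. For each vertex $i \in \{0, \dots, 9\}$, the cone angle $\theta_i$ is the sum, over the $d_i$ triangles of $\cT$ incident to $i$ (read off directly from \eqref{eqn:triangles in the triangulation}), of the hyperbolic angle at $\hat X_i$ inside that triangle. For a triangle with vertices $\hat X_i, \hat X_j, \hat X_k$, formula \eqref{eqn:hyperbolic Klein angle} expresses this angle as $\arccos(\sigma \sqrt{A(\hat X_i, \hat X_j, \hat X_k)})$, where $A$ is the rational expression from \eqref{eqn:hyperbolic Klein dot product} and $\sigma = \sgn\langle V, W\rangle_{\hat X_i}$ for $V = \hat X_j - \hat X_i$ and $W = \hat X_k - \hat X_i$. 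So the proposition reduces to checking, for each of the ten vertices, that the sum of $d_i$ such arccosines lies within $10^{-28}$ of $2\pi$.

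I would handle the algebraic part exactly. Each coordinate listed in \eqref{eqn:coordinates of S hat} is a rational number with denominator $10^{32}$, so after clearing denominators the numerator and denominator of $A(\hat X_i, \hat X_j, \hat X_k)$, as well as the quantity $a_{\hat X_i}\langle V, W\rangle + \langle \hat X_i, V\rangle\langle \hat X_i, W\rangle$ used to determine $\sigma$, can be computed as exact integers in \textsf{mpmath}. In particular, the sign $\sigma$ is decided by a single integer sign test, and $A(\hat X_i, \hat X_j, \hat X_k)$ is represented exactly as a ratio of integers. The only non-algebraic operations in the computation are the square root and $\arccos$.

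For those transcendental steps I would work at a precision far beyond the target: say, $400$-digit precision throughout. Each square root and each $\arccos$ would be evaluated as a rigorous enclosure (an upper and a lower bound) using the directed-rounding bounds developed in $\S$\ref{subsec:transcendental functions}, and enclosures would be combined by interval addition to obtain an enclosure for each $\theta_i(\hat S) - 2\pi$. Given the $10^{-400}$ ambient accuracy and the mere ten degree-$\leq 9$ vertex sums, the resulting interval around each $\theta_i(\hat S) - 2\pi$ has radius orders of magnitude below $10^{-28}$, and checking that every such interval is contained in $(-10^{-28}, 10^{-28})$ yields the proposition.

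The key obstacle is making the computation rigorous rather than merely numerical: every square root, every $\arccos$, and every addition must be propagated as an enclosure with certified endpoints, and the exact rational coordinates from \eqref{eqn:coordinates of S hat}, not any rounded proxy, must be the ones fed to the algebraic step. Once this interval-arithmetic discipline is in place, the enormous slack between 400-digit working precision and the claimed $10^{-28}$ bound makes the error-propagation essentially automatic, so the nontrivial content of the proof reduces to correctly organizing the bookkeeping of the ten cone-angle enclosures.
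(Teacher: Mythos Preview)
Your approach is sound in principle but differs substantially from the paper's, and one step leans on a tool the paper does not actually supply.

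The paper never evaluates $\arccos$ (or $\pi$) rigorously. Instead, for each vertex $i$ it exhibits explicit integer planar vectors $Y_{i,n_0},\dots,Y_{i,n_{d_i-1}}\in\ZZ^2$ (listed in \S\ref{subsec:flatness coords}) whose successive Euclidean angles $\tau_{i,j}$ sum to exactly $2\pi$. It then compares squared cosines: with $\alpha_{i,j}=A(\hat X_i,\hat X_{n_j},\hat X_{n_{j+1}})$ and $\beta_{i,j}$ the squared cosine of the Euclidean angle between $Y_{i,n_j}$ and $Y_{i,n_{j+1}}$, both are exact rationals, and a floating-point check gives $|\alpha_{i,j}-\beta_{i,j}|\le 2.93\times 10^{-32}$ with all values in $[0.00005,0.92]$. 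Since $\arccos(\pm\sqrt{x})$ is $71$-Lipschitz on that interval (an elementary derivative bound, no enclosure of $\arccos$ needed), one gets $|\theta_{i,j}-\tau_{i,j}|\le 71\cdot 2.93\times10^{-32}$, and summing at most nine terms together with $\sum_j\tau_{i,j}=2\pi$ yields $|\theta_i-2\pi|<10^{-28}$.

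Your direct interval-arithmetic route would also work, but the appeal to \S\ref{subsec:transcendental functions} for rigorous $\arccos$ enclosures is misplaced: that appendix only treats $e^x$ (and through it $\ln$, $\sinh$, $\cosh$, $\tanh$); it says nothing about $\arccos$ or about certifying $\pi$ to $10^{-28}$. You would have to supply those bounds yourself---not hard via Taylor series for $\arcsin$ or the integral $\arccos t=\int_t^1 (1-s^2)^{-1/2}\,ds$, but it is extra infrastructure the paper deliberately avoids. The paper's comparison trick trades that transcendental bookkeeping for a purely rational inequality plus a one-line Lipschitz estimate, at the cost of having to produce (and tabulate) the auxiliary vectors $Y_{i,n_j}$.
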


\begin{proof}
    Let $d_i$ be the degree of vertex $i$, and let $n_0, \ldots, n_{d_i-1}$ be the indices of the neighbors of $i$, ordered so that $(i, n_j, n_{j+1}) \in F$ for $j = 0, \ldots, d_i-1$ (here the subscripts $j$ and $j+1$ are taken modulo $d_i$). 
    We first find integer vectors $Y_{i, n_j} \in \RR^2$ for all $i = 0, \ldots, 9$ and $j = 0, \ldots, d_i - 1$ so that the difference $\delta_{i,j} = |\alpha_{i,j} - \beta_{i,j}|$ is minimized, where 
    \begin{equation*}
        \alpha_{i, j} = A(\hat X_i, \hat X_{n_j}, \hat X_{n_{j+1}}) ;
        \hspace{10pt}
        \beta_{i, j} = \frac{\langle Y_{i, n_j}, Y_{i, n_{j+1}} \rangle^2}{\langle Y_{i, n_j}, Y_{i, n_j} \rangle \langle Y_{i, n_{j+1}}, Y_{i, n_{j+1}} \rangle}
    \end{equation*}
    Here, $A$ is the map from Equation \eqref{eqn:hyperbolic Klein dot product}. The coordinates of $Y_{i, n_j}$ are given in $\S$\ref{subsec:flatness coords}. We obtained these coordinates by lifting the link of each vertex $\hat X_i$ isometrically into the Poincar\'e model of the universal cover $\HH^2$, with $\hat X_i \mapsto 0$ and $\hat X_{n_0}$ mapping to the positive real axis. Then, we scale up each coordinate by $10^{32}$ to make them all integer-valued. 
    We computed using floating-point arithmetic that $\max_{i, j} \delta_{i,j} \leq 2.93 \times 10^{-32}$. We also computed that $\alpha_{i,j} \in [0.000052, 0.918]$. It follows that $\alpha_{i,j}, \beta_{i,j} \in [0.00005, 0.92]$ for all $i,j$. In particular, $\langle X_{n_j} - X_i, X_{n_{j+1}} - X_i \rangle_{X_i}$ and $\langle Y_{i, n_j}, Y_{i, n_{j+1}} \rangle$ always share the same sign. 

    Next, we explain how we obtain the cone angle bounds from $\alpha_{i,j}$ and $\beta_{i,j}$. Consider two functions $\phi_+$ and $\phi_-$ given by $\phi_+(x) = \arccos(\sqrt{x})$ and $\phi_-(x) = \arccos(-\sqrt{x})$. Denote by $\theta_{i,j}$ the hyperbolic angle at $\hat X_i$ of the triangle $(\hat X_i, X_{n_j}, \hat X_{n_{j+1}})$, and denote by $\tau_{i,j}$ the Euclidean angle between the two vectors $Y_{i, n_j}$ and $Y_{i, n_{j+1}}$. Since $\langle X_{n_j} - X_i, X_{n_{j+1}} - X_i \rangle_{X_i}$ and $\langle Y_{i, n_j}, Y_{i, n_{j+1}} \rangle$ always share the same sign, we know that 
    \begin{itemize}
        \item $\theta_{i,j} = \phi_+(\alpha_{i,j})$ and $\tau_{i,j} = \phi_+(\beta_{i,j})$ if $\langle X_{n_j} - X_i, X_{n_{j+1}} - X_i \rangle_{X_i} > 0$;
        \item $\theta_{i,j} = \phi_-(\alpha_{i,j})$ and $\tau_{i,j} = \phi_-(\beta_{i,j})$ if $\langle X_{n_j} - X_i, X_{n_{j+1}} - X_i \rangle_{X_i} < 0$.
    \end{itemize}
    Next, observe that 
    \begin{equation*}
        \frac{d\phi_+}{dx} = \frac{-1}{2\sqrt{x(1 - x)}}; \hspace{10pt}
        \frac{d\phi_-}{dx} = \frac{1}{2\sqrt{x(1 - x)}}
    \end{equation*}
    One can compute directly that $| \frac{d\phi_+}{dx} |, | \frac{d\phi_-}{dx} | \leq 71$ on $[0.00005, 0.92]$, so both $\phi_+$ and $\phi_-$ are $71$-Lipschitz on $[0.00005, 0.92]$. Finally, we have 
    \begin{equation*}
        \begin{aligned}
            | \theta_i - 2\pi | 
            &= \left| \sum_{j=0}^{d_i-1} \theta_{i,j} - \sum_{j=0}^{d_i-1} \tau_{i,j} \right|
            \leq \sum_{j=0}^{d_i-1} |\theta_{i,j} - \tau_{i,j}| 
            \leq \max_i d_i \cdot \max_{i,j} | \theta_{i,j} - \tau_{i,j} | \\
            &\leq \max_i d_i \cdot 71 \cdot \max_{i,j} \delta_{i,j}
            \leq 71 \times 9 \times 2.93 \times 10^{-32} < 10^{-28}.
        \end{aligned}
    \end{equation*}
    Taking $\varepsilon = 10^{-28}$ completes the proof. 
\end{proof}

\subsection{Robust Embeddedness}\label{subsec:embeddedness}

The next step is to show that if we slightly perturb the $z$-coordinates of $\hat S$, the resulting 2-torus is still embedded. The key observation (and the main reason we use the Beltrami--Klein model of $\HH^3$) is that the underlying set of the hyperbolic geodesic triangle with vertices $V_1, V_2, V_3 \in \HH^3$ is precisely the Euclidean triangle with the same vertices. Therefore, in this section we ``forget'' the hyperbolic structure and instead treat $\hat S$ as a map $\hat S: (\Sigma_2, \cT) \rightarrow B_1(0)$ whose image is the unit Euclidean ball. We begin with the following definition, which is not “natural” in the hyperbolic setting but is more appropriate for Euclidean geometry.

\begin{definition} \label{def:robust embeddedness}
    A map $S: (\Sigma_2, \cT) \rightarrow B_1(0)$ is \emph{$\lambda$-robustly embedded} if for all $S'$ obtained from moving each of the $z$-coordinates of the vertices of $S$ by at most $\lambda$ is also embedded.
\end{definition}

\begin{remark}
    Readers may have noticed that Definition \ref{def:robust embeddedness} is not invariant under hyperbolic isometries. That is, suppose $S: (\Sigma_2, \cT) \hookrightarrow \HH^3$ is $\lambda$-robustly embedded and $M \in \Isom(\HH^3)$, then there is no reason whatsoever to expect that $M \circ S$ must also be $\lambda$-robustly embedded. One should keep in mind that throughout this section, we will ``pretend to'' work with Euclidean geometry. 
\end{remark}

To show embeddedness, it suffices to consider pairs of triangles with no common vertices and pairs that share one vertex in common. Below is a lemma from \cite{schwartz2025vertexminimalpapertori}. We present a self-contained proof here. 

\begin{lemma}[{\cite[Lemma 3.2]{schwartz2025vertexminimalpapertori}}] \label{lem:embeddedness check for triangles}
    Given a map $S: (\Sigma_2, \cT) \rightarrow B_1(0)$ sending triangles in $\cT$ to Euclidean triangles in $B_1(0)$. If all pairs of triangles of $S$ sharing no vertices in common are disjoint and all pairs of triangles sharing one vertex only intersect at the shared vertex, then $S$ is embedded.
\end{lemma}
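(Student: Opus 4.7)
The plan is to show that $S$ is injective; together with compactness of $\Sigma_2$ and Hausdorffness of the target, this upgrades $S$ to an embedding. For distinct $p, q \in \Sigma_2$ I would locate them in closed triangles $T_p, T_q$ of $\cT$ and do a case analysis on the number of vertices shared by $T_p$ and $T_q$, which is $0$, $1$, $2$, or $3$.

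Three of the four cases are immediate from the hypotheses. If $T_p = T_q$ (three shared vertices), then $S|_{T_p}$ is a homeomorphism onto a non-degenerate Euclidean triangle and is injective. If $T_p$ and $T_q$ share no vertex, they are disjoint in $\RR^3$ by hypothesis. If they share exactly one vertex $v$, the hypothesis gives $T_p \cap T_q = \{v\}$, so $S(p) = S(q)$ would force $p = q = v$. The only case requiring genuine work is when $T_p$ and $T_q$ share an edge $e = [A,B]$; writing $T_p = \triangle ABC$ and $T_q = \triangle ABD$, the goal reduces to showing $T_p \cap T_q = e$ as subsets of $\RR^3$.

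For this final case I would split on planarity. If the affine hulls of $T_p$ and $T_q$ are distinct, then two distinct planes containing $A$ and $B$ meet in the line through $A$ and $B$, and $T_p \cap (\text{line } AB) = [A,B] \subseteq T_q$, giving $T_p \cap T_q = [A,B]$. If the two triangles are coplanar with $C$ and $D$ on opposite sides of line $AB$, again $T_p \cap T_q = [A,B]$. The only surviving possibility, and the principal obstacle, is the \emph{fold}: both triangles lie in one plane with $C$ and $D$ on the same side of $AB$, so that their interiors overlap in a region of positive area.

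To eliminate the fold, I would invoke the one-shared-vertex hypothesis a second time, applied to a neighbor of $T_p$. Assume without loss of generality $\angle BAC \leq \angle BAD$, so that the edge $[A,C]$ of $T_p$ dips into the interior of $T_q$ immediately past $A$. Let $T_r = \triangle ACX$ be the unique triangle of $\cT$ other than $T_p$ incident to the edge $[A,C]$. Provided $X \notin \{B, D\}$, the triangles $T_r$ and $T_q$ share only the vertex $A$, so the hypothesis forces $T_r \cap T_q = \{A\}$, contradicting the fact that $[A,C] \subset T_r$ enters the interior of $T_q$. The case $X = B$ would make $T_r = T_p$ and is excluded; the only genuinely subtle subcase is $X = D$, in which $T_p$, $T_q$, and $T_r = \triangle ACD$ together exhaust the star of $A$. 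In that corner I would run the symmetric argument at vertex $B$ using the edge $[B,C]$ of $T_p$ and its neighboring triangle across that edge. I expect this last piece of combinatorial bookkeeping for the degenerate small-star configuration to be the fiddliest part of a complete write-up, but conceptually the engine of the proof is the one-vertex hypothesis applied to a triangle adjacent to $T_p$ across a non-shared edge.
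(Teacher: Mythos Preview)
Your proposal is correct and uses the same engine as the paper: reduce to the two-shared-vertex case and then apply the one-vertex hypothesis to a triangle adjacent across a non-shared edge. The paper's write-up is a bit more economical---it skips the planarity split and simply shows that each of the four non-shared edges $e_{ik},e_{jk},e_{il},e_{jl}$ has interior missing the opposite triangle (by passing to its neighbor and invoking the one-vertex hypothesis), and it dispatches your $X=D$ corner in one line by observing it would force $\deg A=3$, which the specific triangulation $\cT$ (minimum degree $5$) rules out.
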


\begin{proof}
    Consider the pairs of triangles $(i, j, k)$ and $(j, i, l)$ that share two vertices. Denote by $e_{ab}$ the edge connecting $X_a$ to $X_b$, where $a,b \in \{i, j, k, l\}$. We first recall that every edge is contained in two triangles, so the edge $e_{ik}$ is contained in another triangle $(i, k, m)$, where $m \neq j$. If the interior of $e_{ik}$ intersects $(j, i, l)$, then $(j, i, l)$ and $(i, k, m)$ intersect at a point other than $X_i$. Notice that $l \neq m$, for otherwise the degree of vertex $i$ would equal 3. This implies $(j, i, l)$ and $(i, k, m)$ only share one vertex, but that contradicts our assumption in the lemma. The same argument shows the interior of $e_{jk}$ doesn't intersect $(j, i, l)$, and the interior of $e_{il}$ and $e_{jl}$ doesn't intersect $(i, j, k)$, so these two triangles only intersect at $e_{ij}$, which shows that $S$ is embedded. 
\end{proof}

We first deal with the pairs of disjoint triangles. 
We say that two disjoint triangles $T_1$ and $T_2$ in $\RR^3$ are \emph{$\delta$-separated} if they remain disjoint after simultaneously perturbing the $z$-coordinates of the vertices by at most $\delta$. Here is a sufficient hyperplane separation test to show that $T_1$ and $T_2$ are $\delta$-separated. 

\begin{lemma} \label{lem:separation test for disjoint triangles}
    Two disjoint triangles $T_1$ and $T_2$ are $\delta$-separated if there exists a vector $\hat N$ with $\| \hat N \|_{\infty} < C$ for some $C \in \RR$, such that 
    \begin{equation} \label{eqn:separation test for disjoint triangles 1}
        \min_{X \in T_1} \langle X, \hat N \rangle 
        \, - \, \max_{Y \in T_2} \langle Y, \hat N \rangle 
        \, > \, 2\delta C .
    \end{equation}
    In particular, if $T_1$ has vertices $V_1, V_2, V_3$ and $T_2$ has vertices $W_1, W_2, W_3$, then it suffices to check that 
    \begin{equation} \label{eqn:separation test for disjoint triangles 2}
        \min_{X \in \{V_1, V_2, V_3\}} \langle X, \hat N \rangle 
        \, - \, \max_{Y \in \{W_1, W_2, W_3\}} \langle Y, \hat N \rangle 
        \, > \, 2\delta C .
    \end{equation}
\end{lemma}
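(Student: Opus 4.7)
The plan is to use a hyperplane separation argument, noting that the affine functional $\langle \cdot, \hat N\rangle$ is linear and that perturbing vertex $z$-coordinates only shifts points vertically by at most $\delta$.

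First I would handle the reduction from \eqref{eqn:separation test for disjoint triangles 1} to \eqref{eqn:separation test for disjoint triangles 2}: since $X \mapsto \langle X, \hat N\rangle$ is linear and a triangle is the convex hull of its three vertices, its minimum and maximum on $T_1$ (resp.\ $T_2$) are attained at vertices. So the two inequalities are equivalent, and it suffices to prove the lemma assuming \eqref{eqn:separation test for disjoint triangles 1}.

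Next I would describe perturbed triangles in barycentric form. If $T_1'$ is obtained from $T_1$ by replacing each vertex $V_i$ with $V_i + (0,0,\epsilon_i)$ where $|\epsilon_i| \leq \delta$, then any point of $T_1'$ can be written as $\sum_{i=1}^3 t_i(V_i + (0,0,\epsilon_i)) = X + (0,0,\eta)$ with $X = \sum t_i V_i \in T_1$, $t_i \geq 0$, $\sum t_i = 1$, and $\eta = \sum t_i \epsilon_i$, so $|\eta| \leq \delta$. Consequently, for any $X' \in T_1'$ there exists $X \in T_1$ with
\begin{equation*}
    \langle X', \hat N\rangle \;=\; \langle X, \hat N\rangle + \eta\, \hat N_z,
    \qquad |\eta\, \hat N_z| \;\leq\; \delta\, \|\hat N\|_\infty \;<\; \delta C,
\end{equation*}
and symmetrically for $Y' \in T_2'$.

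Combining these bounds gives $\min_{X' \in T_1'}\langle X', \hat N\rangle \geq \min_{X \in T_1}\langle X, \hat N\rangle - \delta C$ and $\max_{Y' \in T_2'}\langle Y', \hat N\rangle \leq \max_{Y \in T_2}\langle Y, \hat N\rangle + \delta C$, so the hypothesis \eqref{eqn:separation test for disjoint triangles 1} yields $\min_{T_1'}\langle \cdot, \hat N\rangle - \max_{T_2'}\langle \cdot, \hat N\rangle > 0$. Hence $T_1'$ and $T_2'$ are strictly separated by a hyperplane with normal $\hat N$ and are in particular disjoint, proving $T_1$ and $T_2$ are $\delta$-separated. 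There is no serious obstacle here; the only thing one must be careful about is the bookkeeping that every point of the perturbed triangle is obtained from some point of the original triangle by a purely vertical shift of magnitude at most $\delta$, which is why the bound involves $\|\hat N\|_\infty$ rather than, say, $\|\hat N\|_2$.
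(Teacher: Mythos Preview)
Your proof is correct and follows essentially the same approach as the paper: both reduce \eqref{eqn:separation test for disjoint triangles 1} to \eqref{eqn:separation test for disjoint triangles 2} via convexity and linearity, and both observe that a $z$-perturbation of the vertices by at most $\delta$ changes $\langle \cdot, \hat N\rangle$ by at most $\delta C$, preserving the strict separation. Your barycentric bookkeeping is slightly more explicit than the paper's (and your remark that the relevant bound is $\delta|\hat N_z|\le\delta\|\hat N\|_\infty$ explains exactly why the $\ell^\infty$ norm appears), but the argument is the same.
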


\begin{proof}
    We know that every $X \in T$ can be expressed as $X = t_1 V_1 + t_2 V_2 + t_3 V_3$ where $t_i \geq 0$ and $t_1 + t_2 + t_3 = 1$. Linearity of the dot product then implies the minimum of $\langle X, \hat N \rangle$ must be achieved at one of the vertices of $T$, so Equation \eqref{eqn:separation test for disjoint triangles 2} implies \eqref{eqn:separation test for disjoint triangles 1}. Now, if $T'_1$ and $T'_2$ are obtained from purturbing the $z$-coordinates of the vertices of $T_1$ and $T_2$ by at most $\delta$, then the dot product $\langle V_i, \hat N \rangle$ is perturbed by at most $\delta C$. It follows that 
    \begin{equation*}
        \min_{X' \in T_1'} \langle X', \hat N \rangle - \max_{Y' \in T_2'} \langle Y', \hat N \rangle 
        \geq 
        \min_{X \in T_1} \langle X, \hat N \rangle - \max_{Y \in T_2} \langle Y, \hat N \rangle - 2 \delta C
        > 0 .
    \end{equation*}
    This implies $T_1' \cap T_2' = \emptyset$, which completes the proof. 
\end{proof}

Next, we deal with triangles that share one of their vertices. Suppose $(U, V_1, V_2, W_1, W_2)$ are five points in $\RR^3$, and $T_1$, $T_2$ are two triangles with vertices $(U, V_1, V_2)$ and $(U, W_1, W_2)$ respectively. We say that $T_1$ and $T_2$ are \emph{$\delta$-separated} if for all $T'_1$ and $T'_2$ with vertices $(U', V'_1, V'_2)$ and $(U', W'_1, W'_2)$ such that $(U', V'_1, V'_2, W'_1, W'_2)$ are obtained from purturbing the $z$-coordinates of $(U, V_1, V_2, W_1, W_2)$ by at most $\delta$, $T'_1 \cap T'_2 = \{U'\}$. 

\begin{lemma} \label{lem:separation test for triangles sharing one vertex}
    $T_1$ and $T_2$ are $\delta$-separated if there exists a vector $\hat N$ with $\| \hat N \|_{\infty} < C$ for some $C \in \RR$, such that 
    \begin{equation} \label{eqn:separation test for triangles sharing one vertex}
        \min_{X \in \{V_1, V_2\}} \langle X, \hat N \rangle - \langle U, \hat N \rangle > 2\delta C 
        \hspace{5pt} \text{and} \hspace{5pt} 
        \langle U, \hat N \rangle - \min_{Y \in \{W_1, W_2\}} \langle Y, \hat N \rangle> 2\delta C .
    \end{equation}
\end{lemma}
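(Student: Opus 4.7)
The plan is to use $\hat{N}$ to construct a separating hyperplane through $U$ and to show that the separation is robust under any admissible $\delta$-perturbation of $z$-coordinates. Define the affine functional $h(p) = \langle p - U, \hat{N} \rangle$. The displayed inequalities place $h(V_i) > 2\delta C$ for $i = 1, 2$ and (in the natural analogue of Lemma~\ref{lem:separation test for disjoint triangles}, applied separately to each vertex of $T_2$) $h(W_j) < -2\delta C$ for $j = 1, 2$, so $T_1$ lies strictly above the hyperplane $\{h = 0\}$ away from $U$ and $T_2$ strictly below.

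First, I would quantify the perturbation slack in the spirit of Lemma~\ref{lem:separation test for disjoint triangles}. Any $\delta$-shift of the $z$-coordinate of a point $P$ changes $\langle P, \hat{N} \rangle$ by at most $\delta |\hat{N}_z| \le \delta \|\hat{N}\|_\infty < \delta C$. Denoting perturbed points with primes and setting $h'(p) = \langle p - U', \hat{N} \rangle$, both endpoints contribute, so $|h'(V_i') - h(V_i)| < 2\delta C$ and $|h'(W_j') - h(W_j)| < 2\delta C$. The $2\delta C$ margin in the hypothesis is exactly what is needed to preserve the strict inequalities $h'(V_i') > 0$ and $h'(W_j') < 0$ for all $i, j$.

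Next, I would argue half-space containment via convex combinations. A point of $T_1'$ has the form $t_0 U' + t_1 V_1' + t_2 V_2'$ with $t_i \ge 0$ summing to $1$; linearity gives $h'$ of this point equal to $t_1 h'(V_1') + t_2 h'(V_2') \ge 0$, with equality iff $t_1 = t_2 = 0$, i.e.\ only at $U'$. Hence $T_1' \subset \{h' \ge 0\}$ and $T_1' \cap \{h' = 0\} = \{U'\}$. The symmetric argument gives $T_2' \subset \{h' \le 0\}$ with $T_2' \cap \{h' = 0\} = \{U'\}$. Intersecting forces $T_1' \cap T_2' \subset \{h' = 0\}$, and combined with the two singleton intersections this yields $T_1' \cap T_2' = \{U'\}$, which is precisely $\delta$-separation.

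There is no substantial obstacle; the argument is a direct refinement of Lemma~\ref{lem:separation test for disjoint triangles}, with the variation that the separating hyperplane translates with $U$ and the shared vertex lies on the hyperplane rather than strictly off it. The only point that requires a moment's care is the accounting in Step~1: the worst-case shift of $h'(V_i') - h(V_i)$ is $2\delta C$, not $\delta C$, because both $V_i$ and $U$ are perturbed; this is precisely why the hypothesis carries a factor of $2$.
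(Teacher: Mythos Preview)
Your proposal is correct and matches the paper's own proof essentially step for step: the paper also shows that each dot product $\langle P,\hat N\rangle$ moves by at most $\delta C$ under a $z$-perturbation, deduces the strict sign conditions $\langle V'_i-U',\hat N\rangle>0$ and $\langle U'-W'_j,\hat N\rangle>0$, and then writes an arbitrary intersection point as a convex combination from each triangle to force $t_1=t_2=s_1=s_2=0$. Your half-space phrasing and the paper's ``sum of nonnegative terms equals zero'' phrasing are the same argument; note also that both you and the paper implicitly read the second hypothesis as $\langle U,\hat N\rangle-\max_{Y}\langle Y,\hat N\rangle>2\delta C$ (the printed $\min$ appears to be a typo, since the conclusion requires both $W_j$ to lie strictly below the hyperplane).
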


\begin{proof}
    Suppose $T'_1$ and $T'_2$ are obtained from purturbing $(U, V_1, V_2, W_1, W_2)$ by at most $\delta$. The dot products with $\hat N$ is perturbed by at most $\delta C$. It follows that 
    \begin{equation} \label{eqn:proof of separation test for triangles sharing one vertex}
        \min_{X' \in \{V'_1, V'_2\}} \langle X', \hat N \rangle - \langle U', \hat N \rangle > 0
        \hspace{5pt} \text{and} \hspace{5pt} 
        \langle U', \hat N \rangle - \min_{Y' \in \{W'_1, W'_2\}} \langle Y', \hat N \rangle> 0.
    \end{equation}
    To see this implies $T'_1 \cap T'_2 = \{U'\}$, suppose there exists $X' \in T'_1 \cap T'_2$. We may express $X' = t_0 U + t_1 V'_1 + t_2 V'_2 = s_0 U + s_1 W'_1 + s_2 W'_2$ where $t_0 + t_1 + t_2 = s_0 + s_1 + s_2 = 1$ and all coefficients $t_i, s_j \geq 0$. It follows that 
    \begin{equation*}
        \langle X', \hat N \rangle - \langle U', \hat N \rangle 
        = t_1 \langle V'_1 - U', \hat N \rangle + t_2 \langle V'_2 - U', \hat N \rangle
    \end{equation*}
    and 
    \begin{equation*}
        \langle U', \hat N \rangle - \langle X', \hat N \rangle
        = s_1 \langle U' - W'_1, \hat N \rangle + s_2 \langle U' - W'_2, \hat N \rangle.
    \end{equation*}
    This implies 
    \begin{equation*}
        t_1 \langle V'_1 - U', \hat N \rangle 
        + t_2 \langle V'_2 - U', \hat N \rangle
        + s_1 \langle U' - W'_1, \hat N \rangle 
        + s_2 \langle U' - W'_2, \hat N \rangle
        = 0,
    \end{equation*}
    so Equation \eqref{eqn:proof of separation test for triangles sharing one vertex} implies $t_1 = t_2  = s_1 = s_2 = 0$, which gives us $X' = U'$ as desired. 
\end{proof}

We are now ready to prove that $\hat S$ is robustly embedded.

\begin{proposition} \label{prop:embeddedness of S hat}
    $\hat S$ is $10^{-7}$-robustly embedded.
\end{proposition}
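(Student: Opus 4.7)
The plan is to apply the embeddedness reduction from Lemma \ref{lem:embeddedness check for triangles} together with the two hyperplane-separation tests of Lemmas \ref{lem:separation test for disjoint triangles} and \ref{lem:separation test for triangles sharing one vertex}, taken at $\delta = 10^{-7}$, and to certify each required inequality by an exhaustive high-precision computation in mpmath. Using $\chi(\Sigma_2) = -2$ together with $|V|=10$ and $|F|=24$, the Euler formula gives $|E|=36$. Among the $\binom{24}{2}=276$ unordered pairs of triangles in $F$, exactly $36$ share two vertices (one pair per edge). Summing $\binom{d_i}{2}$ over the degree sequence $(9,8,8,8,8,7,7,6,6,5)$ yields $230$, which counts each edge-sharing pair twice, so exactly $158$ pairs share one vertex and the remaining $82$ pairs are disjoint. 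By Lemma \ref{lem:embeddedness check for triangles}, it suffices to verify $\delta$-separation for these $82 + 158 = 240$ pairs.

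For the $82$ disjoint pairs, the plan is to produce, for each pair $(T_1,T_2)$, an integer vector $\hat N$ with $\|\hat N\|_\infty \leq C$ (a modest constant, e.g.\,at most a few hundred after clearing denominators) and to verify the inequality \eqref{eqn:separation test for disjoint triangles 2} in mpmath. Natural candidates for $\hat N$ are integer approximations to the normal of a plane witnessing the disjointness of $T_1$ and $T_2$ in $\hat S$, obtained either by inspection using Figures \ref{fig:surface}--\ref{fig:slice_yz} or by solving the small linear feasibility problem on the six vertex coordinates. Since $2\delta C$ is at most of order $10^{-4}$, while the geometric separations between disjoint triangles of $\hat S$ are typically of order $10^{-2}$ or larger, such an $\hat N$ always exists.

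For the $158$ one-vertex pairs, each pair shares a common vertex $U=\hat X_i$ and has remaining vertices $V_1,V_2$ and $W_1,W_2$. The plan is to exhibit $\hat N$ such that $V_1-U,V_2-U$ lie on the strictly positive side of the hyperplane $\langle \cdot -U,\hat N\rangle = 0$ while $W_1-U,W_2-U$ lie on the strictly negative side, with both excesses exceeding $2\delta C$, thereby invoking Lemma \ref{lem:separation test for triangles sharing one vertex}. Such an $\hat N$ exists whenever the two open planar cones spanned by $\{V_1-U,V_2-U\}$ and $\{W_1-U,W_2-U\}$ are disjoint in the cyclic arrangement of edges around $U$; this disjointness is forced by the link of $\hat X_i$, which by construction is a disk tiled by the $d_i$ triangles incident to $i$. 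Again I pick $\hat N$ with small integer entries and certify via mpmath.

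The main obstacle is bookkeeping rather than mathematics: one must generate and record $240$ explicit certificates and handle the cases with smallest geometric slack. The likely worst offenders are pairs of triangles near the hole of the torus visible in Figures \ref{fig:slice_xy}--\ref{fig:slice_yz}, where successive layers of $\hat S$ approach each other closely. For any pair whose natural separating normal yields a gap too small to beat $2\delta C$, I would either scan a small ball of nearby integer vectors to enlarge the slack or permit a larger $C$ (still well below $10^4$, so $2\delta C$ remains safely below the geometric gap). The section then concludes by recording, for each of the $240$ pairs, the vector $\hat N$ used and the computed values of both sides of \eqref{eqn:separation test for disjoint triangles 2} or \eqref{eqn:separation test for triangles sharing one vertex}.
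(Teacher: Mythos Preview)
Your plan is correct and is essentially the paper's own proof: reduce via Lemma~\ref{lem:embeddedness check for triangles} to the $82$ disjoint and $158$ one-vertex pairs, then certify each pair with the hyperplane tests of Lemmas~\ref{lem:separation test for disjoint triangles} and~\ref{lem:separation test for triangles sharing one vertex}. The paper's implementation differs only in details: it first dilates $\hat S$ by $10^{32}$ so that all vertex coordinates are integers (making the inequalities exact integer checks), and then searches for the separating vectors via a fixed pseudorandom map $\rho:\ZZ_{>0}\to\ZZ^3$ with $\|\rho(n)\|_\infty<10^5$, trying $\hat N=\pm\rho(n)$ for increasing $n$. This succeeds automatically for all but $2$ of the disjoint pairs and $9$ of the one-vertex pairs, for which $\hat N$ is supplied by hand. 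One caution: your estimate that $C$ can be kept to ``a few hundred'' or ``well below $10^4$'' is too optimistic. Several of the paper's hand-picked normals, e.g.\ $(-417,566,51293)$ for $\{(2,3,8),(3,4,7)\}$ or $(-60,-96,22534)$ for $\{(0,2,1),(3,4,7)\}$, have entries of order $10^4$--$5\times10^4$; these arise because vertices $1,2,3,4,7,8$ all have $z$-coordinate within about $0.015$ of $-0.23$, so many triangle pairs are nearly coplanar and the separating direction must be aimed very precisely. Finally, your link-of-$\hat X_i$ remark is only a heuristic (the combinatorial disk structure of the link does not by itself force a separating hyperplane through $U$ for the geometric cones in $\RR^3$), but since your actual certificate is the recorded computation this does not affect the argument.
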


\begin{proof}
    We dilate the surface $\hat S$ by a factor of $10^{32}$ so that every vertex has integer coordinates. Call the dilated surface $\hat S'$. 
    Lemma \ref{lem:separation test for disjoint triangles} and \ref{lem:separation test for triangles sharing one vertex} tells us that it suffices to find normal vectors $\hat N$ for each pair of triangles in $\hat S'$. 

    Among the 276 pairs of triangles from $F$, there are 82 pairs that share no vertices in common, and 158 of which share one vertex in common. We provide an algorithm to generate $\hat N$ for each of these pairs where the conditions in Lemma \ref{lem:separation test for disjoint triangles} and \ref{lem:separation test for triangles sharing one vertex} hold. Consider a map $\rho: \ZZ_{>0} \rightarrow \ZZ^3$ as follows: 
    \begin{equation*}
        \rho(n) = \left( 
            \lfloor 10^5 \cdot L(n \sqrt{2}) \rfloor ,  
            \lfloor 10^5 \cdot L(n \sqrt{3}) \rfloor ,
            \lfloor 10^5 \cdot L(n \sqrt{5}) \rfloor
        \right) ; \hspace{10pt}
        L(x) = 2 (x - \lfloor x \rfloor) - 1
    \end{equation*}
    It is clear that $\| \rho(n) \|_{\infty} < 10^5$ for all $n$. We found indices $n < 2000$ such that the conditions in Lemma \ref{lem:separation test for disjoint triangles} hold for $\delta = 10^{25}$ by taking $\hat N = \pm \rho(n)$ for all but the following two pairs of disjoint triangles, where we manually found the candidates for $\hat N$:
    \begin{equation*}
        \begin{aligned}
            \{ (2, 7, 4), (1, 8, 3) \} , \hspace{10pt} & \hat N = (-35, -74, 12106) ; \\
            \{ (0, 2, 1), (3, 4, 7) \} , \hspace{10pt} & \hat N = (-60, -96, 22534) .
        \end{aligned}
    \end{equation*}
    We also computed $\rho(n)$ for the triangle pairs that share one vertex. We found indices $n < 10^5$ such that the conditions in Lemma \ref{lem:separation test for triangles sharing one vertex} hold for $\delta = 10^{25}$ by taking $\hat N = \pm \rho(n)$ for all but the following nine pairs, where we manually found the candidates for $\hat N$:
    \begin{equation*}
        \begin{aligned}
            \{ (0, 2, 1), (1, 3, 7) \} , \hspace{10pt} & \hat N = (-40, -67, 14035) ; \\
            \{ (0, 2, 1), (2, 7, 4) \} , \hspace{10pt} & \hat N = (73, 97, -22643) ; \\
            \{ (1, 2, 4), (1, 3, 7) \} , \hspace{10pt} & \hat N = (-54, -85, 14773) ; \\
            \{ (1, 2, 4), (1, 8, 3) \} , \hspace{10pt} & \hat N = (-120, -151, 27015) ; \\
            \{ (1, 2, 4), (3, 4, 7) \} , \hspace{10pt} & \hat N = (45, 69, -14773) ; \\
            \{ (1, 3, 7), (2, 3, 8) \} , \hspace{10pt} & \hat N = (-184, -155, -15412) ; \\
            \{ (1, 3, 7), (2, 7, 4) \} , \hspace{10pt} & \hat N = (-36, -73, 12086) ; \\
            \{ (1, 8, 3), (3, 4, 7) \} , \hspace{10pt} & \hat N = (-35, -74, 12107) ; \\
            \{ (2, 3, 8), (3, 4, 7) \} , \hspace{10pt} & \hat N = (-417, 566, 51293) .
        \end{aligned}
    \end{equation*}
    Lemma \ref{lem:embeddedness check for triangles} then implies the $10^{32}$-dilated surface $\hat S'$ is $10^{25}$-robustly embedded, so the original surface $\hat S$ is $10^7$-robustly embedded, as desired.
\end{proof}

\section{Finding Origami 2-Tori} \label{sec:flat}

Our method of finding the flat 2-tori is analogous to Schwartz's method in \cite{schwartz2025vertexminimalpapertori}. The idea is to use the inverse function theorem to show that there exists a neighborhood around $\hat z$ whose image under the map $\Theta$ in Equation \eqref{eqn:Theta definition} hits the origin. 

\begin{definition}
    Let $f: \RR^n \rightarrow \RR^n$ be a smooth map. We say that $f$ is \emph{$\lambda$-expansive} on a subset $A \subset \RR^n$ if for all unit vectors $V \in \RR^{n}$ and $p, q \in A$, we have 
    \begin{itemize}
        \item $\| df|_p(V) \| > 2\lambda$;
        \item The angle between $df|_p(V)$ and $df|_q(V)$ is less than $\frac{\pi}{3}$.
    \end{itemize}
\end{definition}

The following lemma is proved by Schwartz in \cite{schwartz2025vertexminimalpapertori}. Here we provide an alternative self-contained proof. 

\begin{lemma}[{\cite[Lemma 3.3]{schwartz2025vertexminimalpapertori}}] \label{lem:inverse function theorem expansion}
    Let $B_r(p) \subset \RR^n$ denote the open ball of radius $r$ centered at $p$. 
    If $f: \RR^{n} \rightarrow \RR^{n}$ is $\lambda$-expansive on $B_r(p)$ for some $\lambda > 0$, then $B_{\lambda r}(f(p)) \subset f(B_r(p))$.
\end{lemma}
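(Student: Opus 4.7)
The plan is to derive from the two $\lambda$-expansive conditions a global expansion estimate $\|G(x) - G(y)\| > \lambda \|x-y\|$ on $B_r(p)$, and then run a continuation argument along the straight segment from $G(p)$ to any target point $q \in B_{\lambda r}(G(p))$: if $q$ were not already in the image $G(B_r(p))$, the preimage of this segment under $G$ would have to escape $B_r(p)$, and the expansion estimate would then force $\|q - G(p)\| \geq \lambda r$, contradicting the hypothesis on $q$.

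For the expansion estimate, I would fix $x, y \in B_r(p)$ with $x \neq y$, set $V = (x-y)/\|x-y\|$, parameterize $\gamma(t) = y + t(x-y) \in B_r(p)$ for $t \in [0,1]$ (using convexity of the ball), and write $w(t) = dG|_{\gamma(t)}(V)$. Choosing the unit vector $u = w(0)/\|w(0)\|$, the two expansiveness conditions give $\|w(t)\| > 2\lambda$ and $\cos \angle(w(t), w(0)) > \tfrac{1}{2}$, so $\langle w(t), u\rangle > \lambda$ for every $t \in [0,1]$. The fundamental theorem of calculus then yields
\begin{equation*}
\|G(x) - G(y)\| \;\geq\; \langle G(x) - G(y), u\rangle \;=\; \|x-y\| \int_0^1 \langle w(t), u\rangle \, dt \;>\; \lambda \|x-y\|.
\end{equation*}
In particular $G$ is injective on $B_r(p)$, and since each $dG|_x$ has trivial kernel by the first expansiveness condition, $G$ restricts to a diffeomorphism of $B_r(p)$ onto the open set $\Omega := G(B_r(p))$.

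For the main step I would take any $q \in B_{\lambda r}(G(p))$ and suppose for contradiction that $q \notin \Omega$. Parameterize $\sigma(t) = (1-t)G(p) + tq$, $t \in [0,1]$, and let $t^* = \sup\{t \in [0,1] : \sigma(s) \in \Omega \text{ for all } s \in [0,t]\}$. Openness of $\Omega$ together with $\sigma(0) = G(p) \in \Omega$ gives $0 < t^* \leq 1$, and the same openness prevents $\sigma(t^*) \in \Omega$ (else $\sigma$ would continue past $t^*$ inside $\Omega$, contradicting the supremum). Defining $x(t) := G^{-1}(\sigma(t)) \in B_r(p)$ for $t < t^*$ and using compactness of $\overline{B_r(p)}$, I extract a subsequence $t_n \to t^*$ with $x(t_n) \to x^*$; continuity forces $G(x^*) = \sigma(t^*) \notin \Omega$, so $x^*$ cannot be interior and hence $\|x^* - p\| = r$. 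Extending the strict expansion estimate by continuity to $\overline{B_r(p)}$ gives $\|G(x^*) - G(p)\| \geq \lambda r$, contradicting
\begin{equation*}
\|G(x^*) - G(p)\| \;=\; \|\sigma(t^*) - G(p)\| \;=\; t^* \|q - G(p)\| \;<\; \lambda r.
\end{equation*}

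The main subtlety is the boundary passage: I need to transfer the strict expansion bound on the open ball $B_r(p)$ to a non-strict bound on $\overline{B_r(p)}$ via limits, and to extract the limit point $x^* \in \partial B_r(p)$ of the preimage sequence via compactness. Both are standard, and once they are in place the comparison $\lambda r \leq \|G(x^*) - G(p)\| < \lambda r$ immediately contradicts itself and proves the lemma.
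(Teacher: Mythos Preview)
Your proof is correct. Both arguments rest on the same core estimate---projecting $dG|_{\gamma(t)}(V)$ onto a fixed direction (in your case $u = w(0)/\|w(0)\|$, in the paper's case $dG|_p(V)/\|dG|_p(V)\|$) and using the angle bound to get a speed $>\lambda$ in that direction---but the topological packaging differs. You first extract the full two-point expansion $\|G(x)-G(y)\| > \lambda\|x-y\|$, deduce injectivity, and then run a path-lifting / continuation along the segment $[G(p),q]$, locating the exit point of its preimage on $\partial B_r(p)$. The paper never proves injectivity: it only uses the open-mapping consequence of the inverse function theorem, takes the point $\hat x$ of minimal norm in the (compact) complement of the image inside a large ball, approximates it by image points $f(x_n)$, and uses the one-point version $\|f(x_n)\| > \|x_n\|$ of the expansion estimate to trap $\{x_n\}$ in a compact subset of $B_r(p)$, forcing $\hat x$ into the image. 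Your route yields a slightly stronger intermediate conclusion (global bi-Lipschitz, hence injective), while the paper's route is more minimalist and avoids ever inverting $G$ on a whole neighborhood of the segment; both reach the same endpoint with comparable effort.
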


\begin{proof}
    After normalizing by translation and scaling we may assume $f(p) = p = 0$, $\lambda = r = 1$. Since $\|df|_q(V)\| > 0$ for all $q \in B_1(0)$ and unit vectors $V$, $df|_q$ is nondegenerate, so by the inverse function theorem, $f$ is an open map on $B_1(0)$. It follows that $\RR^n \setminus f(B_1(0))$ is closed. Consider the set $A = \overline{B_s(0)} \setminus f(B_1(0))$, where $s = 2 \sup_{x \in B_1(0)} \|f(x)\|$. We know that $A$ is compact, and $A \cup f(B_1(0)) = \overline{B_s(0)}$. The statement is true if $\|x\| \geq 1$ for all $x \in A$. 

    Since $A$ is compact, $\inf_{x \in A} \|x\|$ is achieved by some $\hat x \in A$. Assume for contradiction that $\| \hat x \| = \alpha$ for some $\alpha < 1$. Then, for all $n \in \NN$, there exists $x_n \in B_1(0)$ such that $f(x_n) \in B_{1/n}(\hat x)$. It follows that $\lim_{n \rightarrow \infty} f(x_n) = \hat x$. On the other hand, let $\beta[0,1] \rightarrow \RR^n$ be the unit-speed parametrization of the line segment from $0$ to $x_n$, and let $\gamma = f \circ \beta$. Since $\|df|_{\gamma(t)}(\beta'(t))\| > 2$, we know that $\gamma$ has length at least $2 \|x_n\|$. Also, since the angle between $df|_{\gamma(t)}(\beta'(t))$ and $df|_0(\beta'(t))$ is $< \frac{\pi}{3}$, the projection of $\gamma(t)$ onto the line connecting $0$ to $f(x_n)$ is a curve that moves at a speed greater than 1. It follows that $\|f(x_n)\| > \|x_n\|$, so $\|x_n\| < \frac{1+\alpha}{2}$ for $n$ sufficiently large. This implies the sequence $\{x_n\}$ lives in a compact subset of $B_1(0)$, so it converges to some $y \in B_1(0)$ on a subsequence, but then $f(y) = \hat x$, contradicting $\hat x \in A$. 
\end{proof}

Denote by $\cB$ the set of embeddings $S$ whose $z$-coordinates lie in $B_{10^{-18}}(\hat z)$, where $\hat z$ is the vector of $z$-coordinates of the almost-flat 2-torus $\hat S$ from $\S$\ref{sec:almost flat}.

\begin{proposition} \label{prop:expansion}
    The map $\Theta: \RR^{10} \rightarrow \RR^{10}$ is $\frac{1}{2}$-expansive on $B_{10^{-18}}(\hat z)$. 
\end{proposition}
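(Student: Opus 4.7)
The strategy is to show the two expansiveness conditions hold at the central point $\hat z$ with a comfortable margin, and then propagate them to all of $B_{10^{-18}}(\hat z)$ using the fact that $d\Theta$ varies by a negligible amount on such a tiny ball. Concretely, I would carry out the following four steps.

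First, I would write down explicit formulas for the partial derivatives $\partial \theta_i / \partial z_k$ by differentiating the Beltrami--Klein angle formula \eqref{eqn:hyperbolic Klein angle}. Writing $\theta_i = \sum_{j=0}^{d_i-1} \theta_{i,j}$ as a sum of the angles of the incident triangles, and recalling $\theta_{i,j} = \arccos(\sigma_{i,j}\sqrt{A(\hat X_i, \hat X_{n_j}, \hat X_{n_{j+1}})})$, the chain rule gives $\partial \theta_{i,j}/\partial z_k$ as an explicit rational function of the nine coordinates of the three vertices, times the factor $-\sigma_{i,j}/(2\sqrt{A(1-A)})$. A nonzero contribution only arises when $k \in \{i, n_j, n_{j+1}\}$, so the $10 \times 10$ Jacobian $J := d\Theta|_{\hat z}$ has a sparsity pattern dictated by the edges of $\cT$. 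I would evaluate $J$ at $\hat z$ rigorously in mpmath, obtaining all entries with an error well below $10^{-40}$.

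Second, I would certify a lower bound $\sigma_{\min}(J) \geq c$ with $c$ comfortably above $1$ (I expect $c \geq 2$ or so, which would give the $1/2$-expansiveness with a generous margin). The standard tool is an approximate inverse: compute $M \approx J^{-1}$ numerically, then rigorously verify $\|MJ - I\|_{\mathrm{op}} < 1/2$ using the Frobenius norm as an upper bound on the operator norm. This yields $\sigma_{\min}(J) \geq (1-\|MJ - I\|_{\mathrm{op}})/\|M\|_{\mathrm{op}}$, and all quantities on the right-hand side are concretely bounded in mpmath.

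Third, I would bound $\|d\Theta|_p - J\|_{\mathrm{op}}$ uniformly for $p \in B_{10^{-18}}(\hat z)$. Since the derivative formulas above are smooth on an open neighborhood containing $B_{10^{-18}}(\hat z)$ --- the values $\alpha_{i,j}$ lie in $[0.00005, 0.92]$ by the proof of Proposition \ref{prop:flatness of S hat}, keeping the factor $1/\sqrt{A(1-A)}$ bounded, and $\|\hat X_i\| < 1$ with margin --- one obtains an explicit constant $K$ bounding every second partial derivative $|\partial^2 \theta_i / \partial z_j \partial z_k|$ on this ball. The radius $10^{-18}$ is so small that the resulting Lipschitz estimate $\|d\Theta|_p - J\|_{\mathrm{op}} \leq K \cdot 10^{-18}$ is astronomically smaller than the margin $c-1$ from step two; even a crude bound $K \leq 10^{10}$ would suffice.

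Finally, the combination is immediate. For any unit $V$ and $p \in B_{10^{-18}}(\hat z)$,
\[
    \|d\Theta|_p(V)\| \geq \|JV\| - \|d\Theta|_p - J\|_{\mathrm{op}} \geq c - K \cdot 10^{-18} > 1 = 2 \cdot \tfrac{1}{2},
\]
establishing the first condition. For $p, q \in B_{10^{-18}}(\hat z)$,
\[
    \|d\Theta|_p(V) - d\Theta|_q(V)\| \leq 2K \cdot 10^{-18},
\]
while each of $d\Theta|_p(V)$ and $d\Theta|_q(V)$ has norm $> 1$, so the angle between them is at most $\arcsin(2K \cdot 10^{-18}) \ll \pi/3$.

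The main obstacle is step three: producing a clean, rigorous upper bound on the Hessian of $\Theta$ on $B_{10^{-18}}(\hat z)$. The analytic formulas for $\partial^2 \theta_{i,j}/\partial z_k \partial z_\ell$ are unpleasant since $\arccos \circ \sqrt{\,\cdot\,} \circ A$ is a threefold composition of nontrivial functions, and one must track enough cases to get a uniform bound. In practice this reduces to bounding a finite collection of rational expressions on a small box, which mpmath handles comfortably, but writing out and verifying the estimate is the only part of the proof that requires genuine care.
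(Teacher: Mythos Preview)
Your plan is correct and mirrors the paper's own argument: compute the Jacobian at $\hat z$, certify a lower bound on its smallest singular value (the paper does this via the characteristic polynomial of $M^tM$ rather than an approximate inverse, obtaining $\sigma_{\min} > 1.5$ rather than your hoped-for $2$), bound the Hessian on the ball (the paper gets $K \leq 10^{14}$ in Lemma~\ref{lem:bounding second order partial}, not $10^{10}$), and combine. Both of your expected constants are off, but harmlessly so, since $1.5 > 1$ and $10^{14} \cdot 10^{-18}$ is still negligible against the singular-value margin.
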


\begin{proof}[Proof of Theorem \ref{thm:flat 2-torus for all vertices} assuming Proposition \ref{prop:expansion}]
    From Proposition \ref{prop:flatness of S hat} we know that $\hat S$ is $10^{-28}$-flat, so $\| \Theta(\hat z) \| < 10^{-27}$, or $0 \in B_{10^{-27}}(\Theta(\hat z))$. Then, Proposition \ref{prop:expansion} implies the existence of a hyperbolic origami 2-torus $S$ with coordinates $z \in B_{5 \times 10^{-19}}(\hat z)$ such that $\Theta(z) = 0$. In particular, Proposition \ref{prop:embeddedness of S hat} implies $S$ is embedded. This yields the hyperbolic origami 2-torus as desired. 
\end{proof}

The rest of this section will be devoted to proving Proposition \ref{prop:expansion}. 

\subsection{The Jacobian} \label{subsec:1st order partial}

Our first goal is to analytically compute the Jacobian matrix of the map $\Theta$. Recall from Equation \eqref{eqn:Theta definition} that $\Theta$ is given by 
\begin{equation*}
    \Theta_i(z) = \sum_{(i, j, k) \in F} \theta_{ijk}(z) - 2\pi ,
\end{equation*}
where $\theta_{ijk}(z)$ denotes the angle of the triangle $(i,j,k)$ at vertex $i$. 

We start by computing the first-order partials of $\theta_{ijk}$. 
Given $X_i = (x_i, y_i, z_i)$, denote by $\partial_i$ the operator $\pdv{}{z_i}$. Consider a triangle $(i, j, k) \in F$. We again write $V = X_j - X_i$, $W = X_k - X_i$, and $a_{X_i} = 1 - \|X_i\|^2$. We would like to compute the action of $\partial_i, \partial_j, \partial_k$ on $\langle V, W \rangle_{X_i}$, $\|V\|_{X_i}$, and $\|W\|_{X_i}$. By symmetry of the Riemannian metric, it suffices to compute $\partial_i$ and $\partial_j$. We first observe that 
\begin{equation*}
    \begin{matrix}
        \partial_i \langle X_i, X_i \rangle = 2z_i & 
        \partial_i \langle V, V \rangle = 2z_i - 2z_j & 
        \partial_i \langle W, W \rangle = 2z_i - 2z_k \\
        \partial_i \langle X_i, V \rangle = z_j - 2z_i & 
        \partial_i \langle X_i, W \rangle = z_k - 2z_i &
        \partial_i \langle V, W \rangle = 2z_i -z_j - z_k \\
        \partial_j \langle V, X_i \rangle = z_i &
        \partial_j \langle V, W \rangle = z_k - z_i &
        \partial_j \langle V, V \rangle = 2z_j - 2z_i
    \end{matrix}
\end{equation*}
It follows that
\begin{equation} \label{eqn:partial i dot VW}
    \begin{aligned}
        &\partial_i \langle V, W \rangle_{X_i}
        = \partial_i \left( \frac{\langle V, W \rangle}{a_{X_i}} + \frac{\langle X_i, V \rangle \langle X_i, W \rangle}{a_{X_i}^2} \right) \\
        = & \frac{2z_i - z_j - z_k}{a_{X_i}} + 
        \frac{2z_i \langle V, W \rangle + (z_j - 2z_i) \langle X_i, W \rangle + (z_k - 2z_i) \langle X_i, V \rangle}{a_{X_i}^2} + \frac{4z_i \langle X_i, V \rangle \langle X_i, W \rangle}{a_{X_i}^3}
    \end{aligned}
\end{equation}
and
\begin{equation} \label{eqn:partial j dot VW}
    \partial_j \langle V, W \rangle_{X_i} 
    = \partial_j \left( \frac{\langle V, W \rangle}{a_{X_i}} + \frac{\langle X_i, V \rangle \langle X_i, W \rangle}{a_{X_i}^2} \right) 
    = \frac{z_k - z_i}{a_{X_i}} + \frac{z_i \langle X_i, W \rangle}{a_{X_i}^2}.
\end{equation}

We also have 
\begin{equation} \label{eqn:partial i norm V}
    \begin{aligned}
        \partial_i \|V\|_{X_i}
        &= \partial_i \sqrt{\langle V, V \rangle_{X_i}} 
        = \frac{1}{2 \|V\|_{X_i}} \partial_i \langle V, V \rangle_{X_i} \\
        &= \frac{1}{\|V\|_{X_i}}
        \left( \frac{z_i - z_j}{a_{X_i}} + 
        \frac{z_i \langle V, V \rangle + (z_j - 2z_i) \langle X_i, V \rangle}{a_{X_i}^2} + \frac{2z_i \langle X_i, V \rangle^2}{a_{X_i}^3} \right) 
    \end{aligned}
\end{equation}
and
\begin{equation} \label{eqn:partial j norm V}
    \partial_j \|V\|_{X_i}
    = \frac{1}{2 \|V\|_{X_i}} \partial_j \langle V, V \rangle_{X_i} 
    = \frac{1}{\|V\|_{X_i}}
    \left( \frac{z_j - z_i}{a_{X_i}} + \frac{z_i \langle X_i, V \rangle}{a_{X_i}^2} \right) .
\end{equation}

Therefore, for all $l \in \{i,j,k\}$, we have
\begin{equation} \label{eqn:partial angle}
    \begin{aligned}
        \partial_l \theta_{ijk}
        &= \partial_l \arccos \left( \frac{\langle V, W \rangle_{X_i}}{\|V\|_{X_i} \|W\|_{X_i}} \right) 
        = \frac{- \|V\|_{X_i} \|W\|_{X_i}}{\sqrt{\langle V,V \rangle_{X_i} \langle W, W \rangle_{X_i} - \langle V, W \rangle_{X_i}^2}} \cdot \partial_l \left( \frac{\langle V, W \rangle_{X_i}}{\|V\|_{X_i} \|W\|_{X_i}} \right) \\
        &= \frac{\partial_l (\|V\|_{X_i} \|W\|_{X_i}) \langle V, W \rangle_{X_i}}{\|V\|_{X_i} \|W\|_{X_i} \cdot \sqrt{\langle V,V \rangle_{X_i} \langle W, W \rangle_{X_i} - \langle V, W \rangle_{X_i}^2}} - 
        \frac{\partial_l \langle V, W \rangle_{X_i}}{\sqrt{\langle V,V \rangle_{X_i} \langle W, W \rangle_{X_i} - \langle V, W \rangle_{X_i}^2}} .
    \end{aligned}
\end{equation}

Next, consider the following 10-by-10 matrix $M$: 
\begin{equation*}
        \begin{pmatrix}
        -7.526 & 0.793 & -0.228 & -0.264 & -0.782 & 1.372 & 1.173 & -0.098 & 0.192 & 2.158 \\
        0.684 & 4.991 & -0.344 & -0.104 & -1.468 & -0.260 & 0.208 & -0.661 & -1.528 & 0 \\
        -0.203 & -0.356 & 3.913 & -1.008 & -0.104 & 0 & -0.091 & -0.588 & -0.017 & 0.224 \\
        -0.255 & -0.116 & -1.093 & 4.665 & -0.847 & -0.035 & 0.117 & -0.133 & -0.037 & 0 \\
        -0.753 & -1.639 & -0.112 & -0.843 & 5.270 & -0.642 & 0.820 & 0.015 & 0 & 0.870 \\
        1.112 & -0.244 & 0 & -0.029 & -0.540 & -2.245 & 0.162 & 0 & -0.369 & 1.354 \\
        1.006 & 0.207 & -0.087 & 0.104 & 0.730 & 0.172 & -9.148 & 0 & 0 & 3.639 \\
        -0.086 & -0.674 & -0.580 & -0.121 & 0.013 & 0 & 0 & 5.063 & -1.681 & 0 \\
        0.128 & -1.185 & -0.013 & -0.026 & 0 & -0.305 & 0 & -1.277 & 3.455 & 0 \\
        1.751 & 0 & 0.204 & 0 & 0.733 & 1.355 & 3.444 & 0 & 0 & -11.599
    \end{pmatrix}
\end{equation*}

We state the following lemma, which we will prove in the next section.

\begin{lemma} \label{lem:bounding second order partial}
    For all $z \in B_{10^{-18}}(\hat z)$ and $i,j,k \in \{1, \ldots, 10\}$, we have 
    \begin{equation*}
        \left| \frac{\partial^2 \Theta_i}{\partial z_j \partial z_k} (z) \right| \leq 10^{14}.
    \end{equation*}
\end{lemma}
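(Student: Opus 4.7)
The plan is to reduce the bound on the second-order partials of $\Theta_i$ to bounds on the second-order partials of the individual triangle-angle functions $\theta_{ijk}$, differentiate the formulas from $\S$\ref{subsec:1st order partial} a second time, and bound each ingredient uniformly on $B_{10^{-18}}(\hat z)$ using the coordinates of $\hat z$ together with the estimates already established in $\S$\ref{sec:almost flat}. Since $\Theta_i(z) = \sum_{(i,j,k) \in F} \theta_{ijk}(z) - 2\pi$, we have
\[
\frac{\partial^2 \Theta_i}{\partial z_a \partial z_b} = \sum_{(i,j,k) \in F} \frac{\partial^2 \theta_{ijk}}{\partial z_a \partial z_b},
\]
and only triangles $(i,j,k) \in F$ with $\{a,b\} \subseteq \{i,j,k\}$ contribute nontrivially. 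Since vertex $i$ has degree at most $9$ in $\cT$, at most $9$ terms are nonzero, so it suffices to bound each by $10^{13}$ uniformly on $B_{10^{-18}}(\hat z)$.

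Next, I would differentiate $\theta_{ijk}$ twice analytically. Writing $\theta = \arccos(u)$ with $u = \langle V, W \rangle_{X_i} / (\|V\|_{X_i} \|W\|_{X_i})$, the chain rule gives
\[
\partial_a \partial_b \theta = -\frac{\partial_a \partial_b u}{\sqrt{1 - u^2}} - \frac{u \, \partial_a u \, \partial_b u}{(1 - u^2)^{3/2}}.
\]
The first partials $\partial_a u$ combine Equations \eqref{eqn:partial i dot VW}--\eqref{eqn:partial j norm V}; differentiating these once more via the product, quotient, and chain rules produces an explicit rational expression for $\partial_a \partial_b u$ whose denominator is a product of positive integer powers of $a_{X_i}$, $\|V\|_{X_i}$, and $\|W\|_{X_i}$. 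To control the resulting expression, I would verify three lower bounds that hold on $B_{10^{-18}}(\hat z)$ with enormous slack: (i) from \eqref{eqn:coordinates of S hat}, $\max_i \|\hat X_i\|^2 \leq 0.7$, hence $a_{X_i} \geq 0.3$; (ii) $\|V\|_{X_i}$ is bounded below by a positive constant (checked in floating point from $\hat z$) since no two neighboring vertices of $\hat S$ coincide; and (iii) from the proof of Proposition \ref{prop:flatness of S hat}, $\alpha_{i,j} \leq 0.92$, so $\sin^2 \theta_{ijk} \geq 0.08$ and therefore $\sqrt{1 - u^2} \geq 0.28$. All three bounds trivially survive a perturbation of size $10^{-18}$. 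Every numerator appearing in the symbolic expression for $\partial_a \partial_b \theta$ is a polynomial in coordinates of modulus at most $1$, hence bounded above by an explicit constant. Plugging these upper bounds for numerators and lower bounds for denominators into the displayed formula yields a uniform upper bound on each $|\partial_a \partial_b \theta_{ijk}|$ well below $10^{13}$, and summing over the at most $9$ triangles at vertex $i$ gives the claimed bound $10^{14}$.

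The main obstacle is the symbolic bookkeeping: the full expression for $\partial_a \partial_b u$ contains many terms, and one must case-split on which of $a, b$ equals $i$, $j$, or $k$. No new idea is required beyond careful algebra combined with the high-precision floating-point machinery from $\S$\ref{sec:background}; the $10^{14}$ target is deliberately generous so that each case contributes with a large safety margin, and the actual maximum of the second partials is orders of magnitude smaller.
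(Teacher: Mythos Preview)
Your proposal is correct and follows essentially the same approach as the paper: differentiate the angle formula twice and bound each piece crudely using uniform lower bounds on $a_{X_i}$, on the hyperbolic side lengths, and on $\sin\theta$, all of which the paper has already established. The only organizational difference is that the paper keeps $u=\langle V,W\rangle_{X_i}$, $v=\|V\|_{X_i}$, $w=\|W\|_{X_i}$ as three separate intermediate quantities and records explicit bounds on their first and second partials in two short lemmas before assembling them into the bound on $\partial_{lm}\theta$, whereas you fold everything into the single quotient $u=\cos\theta$; your route requires slightly messier symbolic bookkeeping but avoids the intermediate lemmas.
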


\begin{corollary}
    For all $z \in B_{10^{-18}}(\hat z)$, we have 
    \begin{equation*}
        d\Theta|_z = M + E,
    \end{equation*}
    where $M$ is the matrix above and $E$ is a matrix with $\|E\|_{\infty} < 0.002$. 
\end{corollary}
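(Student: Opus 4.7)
The plan is to establish the statement as a routine consequence of Lemma \ref{lem:bounding second order partial}, once the Jacobian at $\hat z$ has been computed explicitly and shown to agree with $M$.

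\emph{Step 1: Compute $d\Theta|_{\hat z}$ and identify it with $M$.} Each entry $\partial_l \Theta_i$ is a finite sum, over the triangles of $F$ incident to vertex $i$, of terms of the form $\partial_l \theta_{ijk}$. Each such term is an explicit rational function in the coordinates of $X_i, X_j, X_k$ obtained by chaining Equation \eqref{eqn:partial angle} with the formulas \eqref{eqn:partial i dot VW}--\eqref{eqn:partial j norm V}. I would evaluate these rational expressions at the coordinates $\hat X_0, \ldots, \hat X_9$ listed in \eqref{eqn:coordinates of S hat} using \texttt{mpmath} at high working precision (well below $10^{-10}$), and check that the resulting $10 \times 10$ matrix agrees entry-wise with $M$ up to the displayed three decimal places. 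This yields the entry-wise estimate
\begin{equation*}
    \left| \bigl( d\Theta|_{\hat z} - M \bigr)_{ij} \right| < 10^{-3} \qquad \text{for all } i, j \in \{0, \dots, 9\}.
\end{equation*}

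\emph{Step 2: Control $d\Theta|_z - d\Theta|_{\hat z}$ for $z \in B_{10^{-18}}(\hat z)$.} The fundamental theorem of calculus applied along the segment from $\hat z$ to $z$ gives
\begin{equation*}
    \frac{\partial \Theta_i}{\partial z_j}(z) - \frac{\partial \Theta_i}{\partial z_j}(\hat z)
    = \int_0^1 \sum_{k=0}^{9} \frac{\partial^2 \Theta_i}{\partial z_j \partial z_k}\bigl( \hat z + t(z - \hat z) \bigr) \, (z_k - \hat z_k) \, dt .
\end{equation*}
Since $|z_k - \hat z_k| \leq \|z - \hat z\| < 10^{-18}$ for each $k$, and each integrand entry is bounded in absolute value by $10^{14}$ by Lemma \ref{lem:bounding second order partial}, summing over the ten indices $k$ gives
\begin{equation*}
    \left| \frac{\partial \Theta_i}{\partial z_j}(z) - \frac{\partial \Theta_i}{\partial z_j}(\hat z) \right|
    < 10 \cdot 10^{14} \cdot 10^{-18} = 10^{-3} .
\end{equation*}

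\emph{Step 3: Combine.} Writing $E = d\Theta|_z - M = \bigl( d\Theta|_z - d\Theta|_{\hat z} \bigr) + \bigl( d\Theta|_{\hat z} - M \bigr)$ and applying the triangle inequality entry-wise, $|E_{ij}| < 10^{-3} + 10^{-3} = 0.002$, hence $\|E\|_\infty < 0.002$ as claimed.

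The only genuine obstacle is Step 1: one must faithfully assemble, for each $(i, l)$, the sum of at most nine rational contributions $\partial_l \theta_{ijk}$ arising from the triangle list \eqref{eqn:triangles in the triangulation}, and confirm that truncating to three decimals recovers the printed matrix $M$ exactly. This is tedious bookkeeping but purely mechanical; the remainder of the proof is a clean mean-value-theorem estimate that piggybacks on Lemma \ref{lem:bounding second order partial}.
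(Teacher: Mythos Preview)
Your proposal is correct and follows essentially the same approach as the paper: both compute $d\Theta|_{\hat z}$ numerically and verify $\|d\Theta|_{\hat z} - M\|_\infty < 0.001$, then use the second-order partial bound from Lemma~\ref{lem:bounding second order partial} together with a mean-value estimate to obtain $\|d\Theta|_z - d\Theta|_{\hat z}\|_\infty < 10 \cdot 10^{-18} \cdot 10^{14} = 10^{-3}$, and finally combine via the triangle inequality. Your write-up is somewhat more explicit in spelling out the integral form of the mean-value theorem, but the substance is identical.
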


\begin{proof}
    Let $r = 10^{-18}$.
    One can use high precision floating-point arithmetic to compute $d \Theta|_{\hat z}$ and check that $d\Theta|_{\hat z} = M + E'$, where $\|E'\|_{\infty} < 0.001$. It then suffices to show that $d\Theta|_z = d\Theta|_{\hat z} + E''$, where $\| E'' \|_{\infty} < 0.001$. To see this, notice that $\| z - \hat z \| < r$. It follows that 
    \begin{equation*}
        \left| \frac{\partial \Theta_i}{\partial z_j}(z) - \frac{\partial \Theta_i}{\partial z_j}(\hat z) \right|
        \leq 10 \cdot r \cdot \sup_{k \in [10], w \in B} \left| \frac{\partial^2 \Theta_i}{\partial z_j \partial z_k} (w) \right|
        < 10^{-3}.
    \end{equation*}
    This finishes the proof. 
\end{proof}

\begin{proof}[Proof of Proposition \ref{prop:expansion}]
    We first claim that $\| M(V) \| > 1.5$ for all unit vectors $V$. To see this, one can numerically show that the smallest root of the characteristic polynomial of $M^t M$ is greater than $2.25$, where $M^t$ denotes the transpose of $M$. It then follows that the smallest singular value of $M$ is greater than $1.5$ (see Remark \ref{rmk:bounding singular values}). 
    
    Next, given a 10-by-10 matrix $E$ with $\|E\|_{\infty} < 0.002$, we must have 
    \begin{equation*}
        \|E\|_2 \leq \|E\|_F \leq 10^2 \|E\|_{\infty} < 0.2.
    \end{equation*}
    Here, $\|E\|_2 = \sup_{\|V\|=1} \|E(V)\|$ is the spectral norm of $E$ and $\|E\|_F$ is the Frobenius norm. 
    It follows that
    \begin{equation*}
        \| d\Theta|_z (V) \| 
        = \| M(V) + E(V) \| 
        \geq | \|M(V)\| - \|E(V)\| | 
        \geq 1.5 - 0.2
        > 1.
    \end{equation*}
    This shows that $\| d\Theta|_z(V) \| > 2 \lambda$. 

    To see that the angle $\theta$ between $d \Theta|_z(V)$ and $d \Theta|_w (V)$ is less than $\frac{\pi}{3}$ for all $z,w \in B_{10^{-18}}(\hat z)$, we first observe that $d \Theta|_w (V) = d \Theta|_z (V) + E'(V)$ where $\| E'(V) \| < 0.4$. Then, elementary Euclidean geometry gives us
    \begin{equation*}
        \sin \theta \leq \frac{\|E'(V)\|}{\|d \Theta|_z(V)\|} \leq \frac{0.4}{1.5} < \frac{\sqrt{3}}{2}.
    \end{equation*}
    This shows that $\theta < \frac{\pi}{3}$. 
\end{proof}

\begin{remark} \label{rmk:bounding singular values}
    We explain how we find that the singular values of $M$ are greater than $1.5$. Since the entries of $M^t M$ are obtained from taking sums and products of entries of $M$, we may compute them precisely using floating-point arithmetic. This further implies that we may compute precisely the coefficients of the characteristic polynomial $\chi$. We find that the characteristic polynomial has 10 distinct roots $x_i$ by finding $a_i, b_i \in \RR$ such that $a_i < b_i$ and $\chi(a_i) \chi(b_i) < 0$. The intermediate value theorem then guarantees the existence of the root $x_i \in (a_i, b_i)$ for each of the ten pairs $(a_i, b_i)$. We omit the tedious computation. 
\end{remark}

\subsection{Bounds on Angles and Hyperbolic Distances} \label{subsec:crude bounds}

Before proving Lemma \ref{lem:bounding second order partial}, we need to first establish some bounds on the angles and hyperbolic distances of the embedded surfaces in $\cB$. 

\begin{lemma} \label{lem:Euclidean and Hyperbolic Klein norm difference}
    Consider the closed ball $B_r(0) \subset \RR^3$ of radius $r \in (0,1)$ centered at the origin. Then, for all $X \in B_r(0)$, $V \in \RR^3$, we have 
    \begin{equation} \label{eqn:Euclidean and Hyperbolic Klein norm difference}
        \| V \|^2 \leq \| V \|_X^2 \leq \frac{\| V \|^2}{(1 - r^2)^2} .
    \end{equation}
\end{lemma}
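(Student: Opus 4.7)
The plan is to unpack the definition of the Klein norm from Equation \eqref{eqn:klein 3 dot product vectors} and exploit two elementary facts: the Cauchy--Schwarz inequality and the identity $a_X + \|X\|^2 = 1$. Writing everything in terms of the single expression
\begin{equation*}
    \|V\|_X^2 \;=\; \frac{a_X \|V\|^2 + \langle X, V \rangle^2}{a_X^2}
\end{equation*}
reduces the lemma to two clean estimates on this rational expression.

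First I would tackle the lower bound. Since $\|X\| < 1$, we have $a_X \in (0, 1]$, so $1/a_X \geq 1$ and $1/a_X^2 \geq 0$. Splitting
\begin{equation*}
    \|V\|_X^2 \;=\; \frac{\|V\|^2}{a_X} + \frac{\langle X, V\rangle^2}{a_X^2}
\end{equation*}
and noting that both terms are non-negative with the first already bounded below by $\|V\|^2$ yields $\|V\|^2 \leq \|V\|_X^2$.

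For the upper bound, I would apply Cauchy--Schwarz in the form $\langle X, V \rangle^2 \leq \|X\|^2 \|V\|^2$, then combine terms in the numerator using $a_X + \|X\|^2 = 1$:
\begin{equation*}
    \|V\|_X^2 \;\leq\; \frac{a_X \|V\|^2 + \|X\|^2 \|V\|^2}{a_X^2} \;=\; \frac{\|V\|^2}{a_X^2}.
\end{equation*}
Finally, the hypothesis $X \in B_r(0)$ gives $\|X\|^2 \leq r^2$, hence $a_X \geq 1 - r^2 > 0$, so $1/a_X^2 \leq 1/(1-r^2)^2$. Chaining these gives the claimed upper bound.

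There is no real obstacle here; the lemma is a direct algebraic calculation, and the only subtlety is recognizing the simplification $a_X + \|X\|^2 = 1$ that collapses the numerator after applying Cauchy--Schwarz. The lemma will presumably be used later to convert between Euclidean and Klein-metric quantities when deriving the crude bounds needed for Lemma \ref{lem:bounding second order partial}.
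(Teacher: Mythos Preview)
Your proof is correct and essentially identical to the paper's own argument: both split $\|V\|_X^2$ into the two terms from Equation \eqref{eqn:klein 3 dot product vectors}, drop the non-negative second term and use $a_X \leq 1$ for the lower bound, then apply Cauchy--Schwarz together with the identity $a_X + \|X\|^2 = 1$ and the bound $a_X \geq 1 - r^2$ for the upper bound. Your write-up is in fact slightly clearer in making the role of $a_X + \|X\|^2 = 1$ explicit.
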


\begin{proof}
    The first inequality follows from monotonicity of $1 / (1 + x^2)$ n $[0,r]$:
    \begin{equation*}
        \langle V, V \rangle_X 
        = \frac{\langle V, V \rangle}{a_X} + \frac{\langle V, X \rangle^2}{a_X^2}
        \geq \frac{\| V \|^2}{a_X}
        \geq \| V \|^2.
    \end{equation*}
    The second inequality follows from a routine application of the Cauchy-Schwarz inequality and monotonicity of $1 / (1 - x^2)^2$ on $[0,r]$: 
    \begin{equation*}
        \langle V, V \rangle_X 
        \leq \frac{\| V \|^2}{a_X} + \frac{\| V \|^2 \| X \|^2}{a_X^2}
        = \frac{\| V \|^2}{a_X^2}
        \leq \frac{\| V \|^2}{(1 - r^2)^2}.
    \end{equation*}
    This completes the proof. 
\end{proof}

\begin{corollary} \label{cor:tangent vector norm bounds}
    Given $S \in \cB$ and an edge $e$ connecting $X$ to $Y$, let $V = Y - X$. Then, we have $\| V \|_X \in [0.5, 13]$.
\end{corollary}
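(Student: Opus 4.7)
The plan is to reduce the hyperbolic-Klein norm bounds to bounds on Euclidean edge lengths via Lemma~\ref{lem:Euclidean and Hyperbolic Klein norm difference}, and then verify those Euclidean bounds directly from the explicit coordinates in Equation~\eqref{eqn:coordinates of S hat}. The first step is to establish a uniform $r \in (0,1)$ such that $\|X\| \leq r$ for every vertex $X$ of every $S \in \cB$. A direct computation from \eqref{eqn:coordinates of S hat} shows that the maximum squared Euclidean norm among the ten vertices of $\hat S$ is achieved at $\hat X_0$, with $\|\hat X_0\|^2 < 0.62$. Since each $S \in \cB$ differs from $\hat S$ by at most $10^{-18}$ in each $z$-coordinate, the bound $\|X\|^2 < 0.62$ persists (with enormous slack) for every vertex of every $S \in \cB$, so $1 - \|X\|^2 > 0.38$.

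For the upper bound, I would tabulate the Euclidean length of each of the $36$ edges of $\hat S$ and take the maximum: the longest edge is $(0,4)$, with Euclidean length less than $1.6$. A $10^{-18}$ perturbation of any $z$-coordinates changes any edge length by at most $2 \cdot 10^{-18}$, so every edge $V$ of every $S \in \cB$ satisfies $\|V\| < 1.6$. Lemma~\ref{lem:Euclidean and Hyperbolic Klein norm difference} then gives
\[
\|V\|_X \,\leq\, \frac{\|V\|}{1 - \|X\|^2} \,<\, \frac{1.6}{0.38} \,<\, 5 \,<\, 13.
\]
For the lower bound, I would likewise compute all $36$ Euclidean edge lengths and identify the shortest: inspection shows the minimum is attained at the edge $(6,9)$, with Euclidean length approximately $0.519$ (with $(5,8)$ at $\approx 0.521$ and $(0,7)$ at $\approx 0.535$ as the next shortest). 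This stays strictly above $0.5$ after any $10^{-18}$ perturbation of the $z$-coordinates, and the first inequality of Lemma~\ref{lem:Euclidean and Hyperbolic Klein norm difference} gives $\|V\|_X \geq \|V\| > 0.5$.

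The main obstacle is the lower bound, since the smallest Euclidean edge length sits only about $0.02$ above the stated threshold of $0.5$. Accordingly, this check must be performed for every one of the $36$ edges using the high-precision floating-point arithmetic discussed in $\S$\ref{sec:background}, rather than merely estimated for the single shortest edge identified above. The upper bound, the vertex-norm bound, and the conversion factor $1 - \|X\|^2$ all carry ample numerical slack, so crude estimates suffice there.
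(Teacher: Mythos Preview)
Your proposal is correct and follows essentially the same approach as the paper: compute the Euclidean edge lengths of $\hat S$, establish a uniform ball containing all vertices of every $S \in \cB$, and invoke Lemma~\ref{lem:Euclidean and Hyperbolic Klein norm difference}. The paper states the Euclidean range as $[0.509, 1.561]$ and uses $r = 0.8$; your tighter constants ($\|X\|^2 < 0.62$, minimum edge $\approx 0.519$ at $(6,9)$, maximum $< 1.6$ at $(0,4)$) and identification of the extremal edges are consistent with this and equally valid.
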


\begin{proof}
    Denote by $\hat V = \hat Y - \hat X$, where $\hat V$ is an edge connecting $\hat X$ to $\hat Y$ in $\hat S$ that corresponds to $V$. We compute numerically using Equation \eqref{eqn:klein 3 dot product} that $\| \hat V \| \in [0.509, 1.561]$. Then, since $\hat S \subset B_{0.79}(0)$, we must have $S \subset B_{0.8}(0)$ for all $S \in \cB$, so applying Lemma \ref{lem:Euclidean and Hyperbolic Klein norm difference} with $r = 0.8$ gives us the desired bounds.
\end{proof}

\begin{proposition} \label{prop:hyperbolic triangle side length difference bound}
    Given $S \in \cB$, let $e$ be an arbitrary edge in $S$ connecting $X$ to $Y$ that corresponds to the edge $\hat e$ in $\hat S$ connecting $\hat X$ to $\hat Y$. Denote by $l(e)$ and $l(\hat e)$ the hyperbolic length of edges $e$ and $\hat e$ respectively. Then, we have 
    \begin{equation*}
        |l(e) - l(\hat e)| \leq 1.6 \times 10^{-17} .
    \end{equation*}
\end{proposition}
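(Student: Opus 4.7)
My plan is to reduce the comparison of the two hyperbolic edge lengths to bounds on the hyperbolic distances between the matched endpoints. Writing $l(e) = d_{\HH^3}(X, Y)$ and $l(\hat e) = d_{\HH^3}(\hat X, \hat Y)$, the triangle inequality for $d_{\HH^3}$ immediately yields
\[
|l(e) - l(\hat e)| \le d_{\HH^3}(X, \hat X) + d_{\HH^3}(Y, \hat Y),
\]
so it suffices to bound each of the two right-hand terms by roughly $3 \times 10^{-18}$.

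To bound $d_{\HH^3}(X, \hat X)$, I would exploit the defining feature of the Beltrami--Klein model: the Euclidean segment $\gamma(t) = (1-t)\hat X + t X$ is the length-minimizing geodesic from $\hat X$ to $X$, so
\[
d_{\HH^3}(X, \hat X) = \int_0^1 \|X - \hat X\|_{\gamma(t)} \, dt.
\]
Since $S \in \cB$ differs from $\hat S$ only in the $z$-coordinates of the vertices, with $\|z - \hat z\| \le 10^{-18}$, we have $\|X - \hat X\| = |z_X - \hat z_X| \le 10^{-18}$. The same argument used in the proof of Corollary \ref{cor:tangent vector norm bounds} shows that $S, \hat S \subset B_{0.8}(0)$; by convexity of $B_{0.8}(0)$ the entire path $\gamma$ remains inside this ball.

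I would then apply Lemma \ref{lem:Euclidean and Hyperbolic Klein norm difference} with $r = 0.8$ to obtain, uniformly in $t$,
\[
\|X - \hat X\|_{\gamma(t)} \le \frac{\|X - \hat X\|}{1 - 0.8^2} = \frac{|z_X - \hat z_X|}{0.36}.
\]
Integrating over $t \in [0,1]$ gives $d_{\HH^3}(X, \hat X) \le 10^{-18}/0.36 < 2.8 \times 10^{-18}$, and an identical argument handles $d_{\HH^3}(Y, \hat Y)$. Combining via the triangle inequality yields $|l(e) - l(\hat e)| < 5.6 \times 10^{-18}$, which is comfortably below $1.6 \times 10^{-17}$.

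I do not anticipate a serious obstacle here: the proof only assembles the triangle inequality for $d_{\HH^3}$, the fact that Euclidean segments are hyperbolic geodesics in the Beltrami--Klein model, convexity of the Euclidean ball $B_{0.8}(0)$, and the Euclidean-versus-hyperbolic norm comparison of Lemma \ref{lem:Euclidean and Hyperbolic Klein norm difference}. The only bookkeeping is to confirm that the perturbation $X - \hat X$ has only a $z$-component (so its Euclidean norm is at most $10^{-18}$) and that the straight segment joining $\hat X$ to $X$ stays in $B_{0.8}(0)$; both facts follow directly from the definition of $\cB$ together with the containment $\hat S \subset B_{0.79}(0)$ already observed.
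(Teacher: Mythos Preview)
Your proposal is correct and follows essentially the same route as the paper: bound $d_{\HH^3}(X,\hat X)$ and $d_{\HH^3}(Y,\hat Y)$ by integrating the Beltrami--Klein norm along the straight segment, using Lemma~\ref{lem:Euclidean and Hyperbolic Klein norm difference} with $r=0.8$, and then apply the triangle inequality for $d_{\HH^3}$. Your bookkeeping is in fact slightly tighter than the paper's---you use the norm ratio $1/(1-r^2)=1/0.36$ directly, whereas the paper records the cruder estimate $\|V\|_X \le 8\|V\|$ coming from $1/(1-0.8^2)^2\le 8$, which is why it lands exactly at $1.6\times 10^{-17}$ rather than your $5.6\times 10^{-18}$.
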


\begin{proof}
    From Corollary \ref{cor:tangent vector norm bounds} we know that $S \subset B_{0.8}(0)$. Also, we must have $\| X - \hat X \| \leq 10^{-18}$ and $\| Y - \hat Y \| \leq 10^{-18}$. To find bounds on the hyperbolic distance $d_g(X, \hat X)$ between $X$ and $\hat X$, recall that geodesics in $\HH^3$ are straight lines. We can therefore parametrize the geodesic from $X$ to $\hat X$ as $\gamma(t) = (1-t) X + t \hat X$, where $t \in [0,1]$. It follows that 
    \begin{equation*}
        d_g(X, \hat X)
        = l(\gamma(t))
        = \int_0^1 \| \gamma' \|_{\gamma(t)} dt
        = \int_0^1 \| \hat X - X \|_{\gamma(t)} dt
        \leq 8 \| \hat X - X \| 
        \leq 8 \times 10^{-18}
    \end{equation*}
    where the second to the last inequality comes from Lemma \ref{lem:Euclidean and Hyperbolic Klein norm difference} and $1 / (1 - 0.8^2)^2 \leq 8$. We similarly get $d_g(Y, \hat Y) \leq 8 \times 10^{-18}$. It follows that
    \begin{equation*}
        \begin{aligned}  
        l(e) 
        &= d_g(X, Y)
        \leq d_g(X, \hat X) + d_g(\hat X, \hat Y) + d_g(\hat Y, Y)
        \leq 1.6 \times 10^{-17} + l(\hat e) \\
        l(\hat e)
        &= d_g(\hat X, \hat Y)
        \leq d_g(\hat X, X) + d_g(X, Y) + d_g(Y, \hat Y)
        \leq 1.6 \times 10^{-17} + l(e)
        \end{aligned}
    \end{equation*}
    Combining these two equations gives us the desired result.
\end{proof}

\begin{corollary} \label{cor:hyprbolic triangle side length bounds}
    With the same setup as above, we have $l(e) \in [0.6, 2.1]$. 
\end{corollary}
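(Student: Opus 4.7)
The plan is to reduce the claim to a finite computational check on $\hat S$ via Proposition 3.12, and then evaluate the hyperbolic lengths of the $36$ edges of $\hat S$ using the closed-form Beltrami--Klein distance formula.

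First, by Proposition 3.12, $|l(e) - l(\hat e)| \leq 1.6 \times 10^{-17}$ where $\hat e \subset \hat S$ is the edge corresponding to $e$. Since $\chi(\Sigma_2) = -2$ and $\cT$ has $|V| = 10$ vertices and $|F| = 24$ triangles, Euler's formula gives $|E| = 36$ (equivalently, the degree sequence in Section 1.2 sums to $72 = 2|E|$). Thus it suffices to verify the slightly tighter enclosure
$$ l(\hat e) \in \bigl[\, 0.6 + 1.6 \times 10^{-17},\ 2.1 - 1.6 \times 10^{-17} \,\bigr] $$
for each of the $36$ edges $\hat e$ of $\hat S$.

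Next, I would compute $l(\hat e)$ directly. Since geodesics in the Beltrami--Klein model are straight lines, parametrize the geodesic from $\hat X$ to $\hat Y$ as $\gamma(t) = (1-t)\hat X + t\hat Y$ for $t \in [0,1]$. Using Equation \eqref{eqn:klein 3 dot product vectors}, the integrand $\|\hat Y - \hat X\|_{\gamma(t)}$ is the square root of a rational function of $t$; integrating (or equivalently, using the standard isometry between the Klein and hyperboloid models) yields the closed form
$$ l(\hat e) = \operatorname{arccosh}\!\left( \frac{1 - \langle \hat X, \hat Y \rangle}{\sqrt{a_{\hat X}\, a_{\hat Y}}} \right), $$
where $a_X = 1 - \|X\|^2$. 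Evaluating this at each of the $36$ edges using mpmath at, say, $10^{-50}$ precision and invoking the certified transcendental-function bounds from $\S$\ref{subsec:transcendental functions}, I would check that every $l(\hat e)$ lies strictly inside the target interval, and then conclude by Proposition \ref{prop:hyperbolic triangle side length difference bound}.

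The only non-trivial aspect is bookkeeping: justifying the closed-form identity (either by a one-line isometry argument with the hyperboloid or by explicit antidifferentiation from Equation \eqref{eqn:klein 3 dot product vectors}) and propagating rigorous error bounds through $\sqrt{\cdot}$ and $\operatorname{arccosh}$ so that the $36$ edge-length evaluations certify the interval inclusion within the $10^{-17}$ margin. Since the coordinates of $\hat S$ in \eqref{eqn:coordinates of S hat} are truncated at $10^{-32}$, there is ample numerical margin and the check is routine for mpmath.
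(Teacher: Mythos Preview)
Your proposal is correct and follows the same overall strategy as the paper: invoke Proposition~\ref{prop:hyperbolic triangle side length difference bound} to reduce to a finite check on the edges of $\hat S$, compute each $l(\hat e)$ with certified error bounds, and conclude. The one substantive difference is the distance formula. You use the hyperboloid-model identity $d_g(X,Y)=\operatorname{arccosh}\bigl((1-\langle X,Y\rangle)/\sqrt{a_X\,a_Y}\bigr)$, whereas the paper uses the projective cross-ratio, writing $d_g(X,Y)=\tfrac12\ln\bigl(t_2(1-t_1)\big/(t_1(1-t_2))\bigr)$ in terms of the roots $t_1<0<1<t_2$ of $\|(1-t)X+tY\|^2=1$ and then simplifying via the discriminant of that quadratic. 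Both formulas are standard and equivalent; yours is arguably tidier, but note that \S\ref{subsec:transcendental functions} certifies only $e^x$ (and hence $\ln$ by inversion), so in practice you would still write $\operatorname{arccosh}(x)=\ln\bigl(x+\sqrt{x^2-1}\bigr)$ and route through exactly the same square-root and logarithm bounds the paper uses.
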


\begin{proof}
    Using the coordinates of $\hat S$, we compute that $l(\hat e) \in [0.63, 2.08]$. We explain how we establish the bound. 
    Given two distinct points $X, Y \in \HH^3$, the quadratic polynomial $\| (1 - t)X + tY \|^2 = 1$ has two distinct real roots $t_1 < 0$ and $t_2 > 1$. Let $A = (1 - t_1)X + t_1Y$ and $B = (1 - t_2)X + t_2Y$. The hyperbolic distance $d_g(X, Y)$ is given by 
    \begin{equation*}
        d_g(X, Y) 
        = \frac{1}{2} \ln \left( \frac{\| X - B \| \| Y - A \|}{\| X - A \| \| Y - B \|} \right)
        = \frac{1}{2} \ln \left( \frac{t_2 - t_1t_2}{t_1 - t_1t_2} \right).
    \end{equation*}

    We may express $\| (1 - t)X + tY \|^2 = 1$ as $at^2 + bt + c = 0$, where 
    \begin{equation*}
        a = \langle Y - X, Y - X \rangle ; \hspace{4pt}
        b = 2 \langle X, Y - X \rangle ; \hspace{4pt}
        c = \langle X, X \rangle - 1 .
    \end{equation*}
    Denote by $\Delta = b^2 - 4ac$ the discriminant of the quadratic $at^2 + bt + c = 0$. One can check that $\Delta > 0$ if $X \neq Y$. It follows that 
    \begin{equation} \label{eqn:hyprbolic triangle side length bounds}
        d_g(X, Y) 
        = \frac{1}{2} \ln \left( \frac{\Delta^{1/2} - b - 2c}{-\Delta^{1/2} - b - 2c} \right) 
        = \ln \left( \Delta^{1/2} - b - 2c \right) 
        - \frac{1}{2} \ln \left( 4c^2 + 4ac + 4bc \right)
    \end{equation}
    Since the square root function is monotonic increasing, it suffices to find floating-point numbers $x_1, x_2$ such that $x_1^2 \leq x \leq x_2^2$ to show $\sqrt{x} \in [x_1,x_2]$. This allows us to bound the value of $\Delta^{1/2}$ within $10^{-32}$. It follows that $\Delta^{1/2} - b - 2c \in [1.93, 6.3]$. We also directly computed that $4c^2 + 4ac + 4bc \in [0.62, 1.99]$. It remains to compute $\ln(x)$ for $x \in [0.62, 6.3]$, which gives us $l(\hat e) \in [0.63, 2.08]$. We refer to $\S$\ref{subsec:transcendental functions} for how we obtained this bound. 
    The corollary then follows from Proposition \ref{prop:hyperbolic triangle side length difference bound}. 
\end{proof}

\begin{proposition} \label{prop:hyperbolic law of cosine lipschitz}
    The map 
    \begin{equation} \label{eqn:hyperbolic law of cosine lipschitz}
        \psi(a, b, c) = \frac{\cosh a \cosh b - \cosh c}{\sinh a \sinh b}
    \end{equation}
    is $70$-Lipschitz on $[0.6, 2.1]^3$. 
\end{proposition}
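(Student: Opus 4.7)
The plan is to show that $\psi$ is $C^1$ on an open neighborhood of $[0.6, 2.1]^3$ (since $\sinh x > 0$ there, no denominator vanishes), compute the three partial derivatives explicitly, bound each uniformly on the cube, and then conclude by the mean value inequality on the convex domain that $\psi$ is Lipschitz with constant at most $\sup \|\nabla \psi\|_2$.

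First I would compute the partials by the quotient rule, using the identity $\cosh^2 x - \sinh^2 x = 1$ to cancel terms. A short calculation yields
\begin{equation*}
    \partial_a \psi = \frac{\cosh a \cosh c - \cosh b}{\sinh^2 a \, \sinh b}, \quad
    \partial_b \psi = \frac{\cosh b \cosh c - \cosh a}{\sinh a \, \sinh^2 b}, \quad
    \partial_c \psi = \frac{-\sinh c}{\sinh a \, \sinh b}.
\end{equation*}
Next I would bound each of these on the cube using monotonicity of $\sinh$ and $\cosh$, so that on $[0.6, 2.1]$ we may replace each factor by its value at the appropriate endpoint. The bound on $\partial_c$ is immediate: $|\partial_c \psi| \leq \sinh(2.1) / \sinh^2(0.6) < 10$. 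For $\partial_a$ the naive estimate $|\cosh a \cosh c - \cosh b| \leq \cosh^2(2.1)$ gives $|\partial_a \psi| \leq \cosh^2(2.1) / \sinh^3(0.6) \approx 67$, which combined with the analogous bound for $\partial_b$ is just too loose. The remedy is to split by the sign of $\cosh a \cosh c - \cosh b$: in the nonnegative case, dropping the subtracted term factors the bound as $(\cosh a / \sinh^2 a)(\cosh c / \sinh b)$, where $\cosh a / \sinh^2 a$ is decreasing in $a$ (one checks its derivative is $-(\cosh^2 a + 1)/\sinh^3 a$), so the maximum is attained at $a = 0.6, b = 0.6, c = 2.1$, giving $|\partial_a \psi| < 20$; in the negative case, $|\partial_a \psi| \leq \cosh b / (\sinh^2 a \sinh b) = 1/(\sinh^2 a \tanh b) \leq 1/(\sinh^2(0.6) \tanh(0.6)) < 5$. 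By the $a \leftrightarrow b$ symmetry of $\psi$, the same bound $|\partial_b \psi| < 20$ holds.

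Finally, since $[0.6, 2.1]^3$ is convex and $\psi$ is $C^1$ there, integrating $\nabla \psi$ along the line segment between two points yields $|\psi(p) - \psi(q)| \leq (\sup \|\nabla \psi\|_2) \cdot \|p - q\|_2$, and combining the three partial bounds gives $\sup \|\nabla \psi\|_2 \leq \sqrt{20^2 + 20^2 + 10^2} = 30 < 70$, proving the claim with considerable slack. The only real obstacle is recognizing that the crude triangle-inequality bound on the numerator of $\partial_a \psi$ gives a constant uncomfortably close to $70$; the sign split above is what reduces the estimate to a safely sub-$30$ quantity. Everything else is routine arithmetic with explicit values of $\sinh$ and $\cosh$ at the two endpoints.
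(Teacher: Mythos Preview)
Your proof is correct and follows essentially the same strategy as the paper: compute the three partials explicitly, bound each on the cube via monotonicity of $\sinh,\cosh,\tanh$, and conclude by the mean value inequality on the convex domain. The only minor difference is that the paper bounds $|\partial_a\psi|$ by the triangle inequality on the numerator (obtaining $\leq 42$, with endpoints evaluated at the looser value $0.5$) rather than your sign split (obtaining $<20$), so the paper arrives at $\sqrt{42^2+42^2+15^2}\leq 70$ in place of your $\sqrt{20^2+20^2+10^2}=30$.
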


\begin{proof}
    The partial derivatives of $\psi$ are given by 
    \begin{equation*}
        \pdv{\psi}{a} = \frac{\cosh a \cosh c - \cosh b}{\sinh^2 a \sinh b} ; \hspace{10pt}
        \pdv{\psi}{b} = \frac{\cosh b \cosh c - \cosh a}{\sinh^2 b \sinh a} ;  \hspace{10pt}
        \pdv{\psi}{c} = - \frac{\sinh c}{\sinh a \sinh b}.
    \end{equation*}
    Since $\sinh, \cosh, \tanh$ are all positive and monotonic increasing on $[0.5, 2.1]$, we have $|\pdv{\psi}{c}| \leq \frac{\sinh 2.1}{\sinh^2 0.5} \leq 15$ and 
    \begin{equation} \label{eqn:hyperbolic law of cosine lipschitz proof}
        \left| \pdv{\psi}{a} \right|
        \leq \left| \frac{\cosh c}{\tanh a \sinh a \sinh b} \right| + \left| \frac{1}{\sinh^2 a \tanh b} \right|
        \leq \frac{\cosh 2.1 + 1}{\tanh 0.5 \sinh^2 0.5}
        \leq 42.
    \end{equation}
    See $\S$\ref{subsec:transcendental functions} for finding bounds on the values of $\sinh x, \cosh x, \tanh x$ where $x = 0.5, 2.1$.
    Similarly, $| \pdv{\psi}{b} | \leq 42$. This implies 
    \begin{equation*}
        \frac{|\psi(y) - \psi(x)|}{\| y - x\|}
        \leq \sup_{I} \sqrt{\left( \pdv{\psi}{a} \right)^2 + \left( \pdv{\psi}{b} \right)^2 + \left( \pdv{\psi}{c} \right)^2} 
        \leq \sqrt{42^2 + 42^2 + 15^2}
        \leq 70
    \end{equation*}
    for all $x,y \in [0.6, 2.1]^3$. 
\end{proof}

\begin{corollary} \label{cor:sine angle bounds}
    For all $S \in \cB$ and an angle $\theta$ of a triangle in $S$, we have $|\sin \theta| \geq 0.24$. 
\end{corollary}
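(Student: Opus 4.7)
The plan is to reduce the bound on $|\sin \theta|$ to a bound on $|\cos \theta|$ via the hyperbolic law of cosines, compare this to the corresponding angle $\hat \theta$ in $\hat S$ using the Lipschitz estimate for $\psi$, and finally read off a bound on $|\cos \hat \theta|$ from the numerical data $\alpha_{i,j}$ already collected in the proof of Proposition \ref{prop:flatness of S hat}.

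First, given $S \in \cB$ and an angle $\theta$ at a vertex of one of its triangles, let $a, b$ be the hyperbolic lengths of the two edges meeting at that vertex and $c$ the hyperbolic length of the opposite edge. The hyperbolic law of cosines gives $\cos \theta = \psi(a, b, c)$ with $\psi$ as in Equation \eqref{eqn:hyperbolic law of cosine lipschitz}. By Corollary \ref{cor:hyprbolic triangle side length bounds}, the triple $(a, b, c)$ lies in $[0.6, 2.1]^3$, so Proposition \ref{prop:hyperbolic law of cosine lipschitz} applies.

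Next, let $\hat \theta$ be the corresponding angle of the corresponding triangle in $\hat S$, with side lengths $\hat a, \hat b, \hat c$. Proposition \ref{prop:hyperbolic triangle side length difference bound} gives $|a - \hat a|, |b - \hat b|, |c - \hat c| \leq 1.6 \times 10^{-17}$, hence $\| (a,b,c) - (\hat a, \hat b, \hat c) \| \leq \sqrt{3} \cdot 1.6 \times 10^{-17}$. The 70-Lipschitz property of $\psi$ then yields
$$ |\cos \theta - \cos \hat \theta| \leq 70 \cdot \sqrt{3} \cdot 1.6 \times 10^{-17} < 2 \times 10^{-15}. $$
From the proof of Proposition \ref{prop:flatness of S hat}, $\cos^2 \hat \theta$ equals some $\alpha_{i,j}$, and we have already recorded that $\alpha_{i,j} \leq 0.918$. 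Therefore $|\cos \hat \theta| \leq \sqrt{0.918}$, and combining with the displayed inequality,
$$ \cos^2 \theta \leq \left( \sqrt{0.918} + 2 \times 10^{-15} \right)^2 < 0.9181, $$
so $|\sin \theta| \geq \sqrt{1 - 0.9181} > 0.28 > 0.24$.

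There is no real obstacle here — the proof is essentially bookkeeping, combining a Lipschitz bound with an explicit numerical bound on the angles of $\hat S$. The one subtle point to handle carefully is that the upper bound $\alpha_{i,j} \leq 0.918$ was used in Proposition \ref{prop:flatness of S hat} as a bound on $\cos^2 \hat \theta_{i,j}$ over all angles appearing in $\hat S$, so we really do have an a priori bound on $|\cos \hat \theta|$ for every angle of every triangle of $\hat S$; the perturbation estimate then transfers this to $S$.
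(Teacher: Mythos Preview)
Your proof is correct and follows essentially the same route as the paper: bound $|\cos\theta - \cos\hat\theta|$ via the hyperbolic law of cosines and the $70$-Lipschitz estimate for $\psi$ (using Proposition~\ref{prop:hyperbolic triangle side length difference bound} and Corollary~\ref{cor:hyprbolic triangle side length bounds}), then bound $|\cos\hat\theta|$ numerically. The only difference is that the paper recomputes $\cos\hat\theta \in [-0.008, 0.96]$ directly from Equations~\eqref{eqn:hyperbolic Klein dot product}--\eqref{eqn:hyperbolic Klein angle}, whereas you recycle the bound $\alpha_{i,j} \leq 0.918$ already established in the proof of Proposition~\ref{prop:flatness of S hat}; since those $\alpha_{i,j}$ indeed range over $\cos^2$ of every angle of every triangle of $\hat S$, this is a legitimate (and slightly more economical) shortcut.
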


\begin{proof}
    Let $\hat\theta$ be the corresponding angle of $\theta$ in $\hat S$. 
    We compute using Equation \eqref{eqn:hyperbolic Klein dot product} and \eqref{eqn:hyperbolic Klein angle} that $\hat \theta$ satisfies $\cos \hat\theta \in [-0.008, 0.96]$. Denote by $a, b, c$ the length of the edges of the triangle that contains $\theta$, where $c$ corresponds to the opposite edge of $\theta$. Denote by $\hat a, \hat b, \hat c$ the edge lengths in $\hat S$. 
    From Proposition \ref{prop:hyperbolic triangle side length difference bound}, we know that $|a - \hat a| \leq 1.6 \times 10^{-17}$. Similarly, $|b - \hat b| \leq 1.6 \times 10^{-17}$ and $|c - \hat c| \leq 1.6 \times 10^{-17}$. It follows that $|\sqrt{a^2 + b^2 + c^2} - \sqrt{\hat a^2 + \hat b^2 + \hat c^2}| \leq 2.8 \times 10^{-17}$. Also, the hyperbolic law of cosine implies 
    \begin{equation*}
        \cos \theta = \frac{\cosh a \cosh b - \cosh c}{\sinh a \sinh b} = \psi(a, b, c).
    \end{equation*}
    From Proposition \ref{prop:hyperbolic law of cosine lipschitz} we know that $\psi$ is $70$-Lipschitz on $[0.6, 2.1]^3$. Also, Proposition \ref{cor:hyprbolic triangle side length bounds} gives us $a,b,c,\hat a, \hat b, \hat c \in [0.6, 2.1]$, so $|\cos\theta - \cos\hat\theta \, | \leq 2 \times 10^{-15}$. This gives us $\cos\theta \in [-0.01, 0.961]$. It remains a routine computation to check that $|\sin\theta| \geq 0.24$.
\end{proof}

\subsection{Bounds on the Second-Order Partials} \label{subsec:second order partial}

Given $S \in \cB$, for a triangle $(i, j, k) \in F$, recall our notation $V = X_j - X_i$, $W = X_k - X_i$, $z_i$ the $z$-coordinate of $X_i$, and $\theta_{ijk}$ the angle at $X_i$. Throughout this section, we write 
\begin{equation*}
    u = \langle V, W \rangle_{X_i} ; \hspace{5pt}
    v = \| V \|_{X_i} ; \hspace{5pt}
    w = \| W \|_{X_i} ; \hspace{5pt}
    \theta = \theta_{ijk} = \arccos(u(vw)^{-1}).
\end{equation*}
We remind the readers of the bounds we obtained from $\S$\ref{subsec:crude bounds}:
\begin{itemize}
    \item $\| X_i \| \leq 0.8$ ($S \subset B_{0.8}(0)$);
    \item $\| V \|, \| W \| \leq 1.6$ ($S \subset B_{0.8}(0)$); 
    \item $v, w \in [0.5, 13]$ (Corollary \ref{cor:tangent vector norm bounds});
    \item $|\sin \theta| \geq 0.24$ (Corollary \ref{cor:sine angle bounds});
    \item $|z_i| \leq 0.8$ ($S \subset B_{0.8}(0)$). 
\end{itemize}
We remark that the Cauchy-Schwarz inequality further implies $|u| \leq 1.6^2$. 

\begin{lemma} \label{lem:bounds for first order partial}
    The following bounds on the first-order partials hold for all $l = i, j, k$:
    \begin{enumerate}
        \item $|\partial_l u| \leq 10^3$;
        \item $|\partial_l v|, |\partial_l w| \leq 10^3$; 
        \item $|\partial_l \theta| \leq 10^{7}$.
    \end{enumerate}
\end{lemma}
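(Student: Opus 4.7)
The plan is to substitute the crude bounds established in $\S$\ref{subsec:crude bounds} into the explicit first-order partial formulas derived in $\S$\ref{subsec:1st order partial} -- namely Equations \eqref{eqn:partial i dot VW}, \eqref{eqn:partial j dot VW}, \eqref{eqn:partial i norm V}, \eqref{eqn:partial j norm V} for the partials of $u$, $v$, $w$, and Equation \eqref{eqn:partial angle} for $\partial_l \theta$ -- and then estimate term by term. By the symmetry of the Riemannian inner product in $V$ and $W$, the bounds for $\partial_k$ follow from those for $\partial_j$ after swapping $V \leftrightarrow W$ and $j \leftrightarrow k$, so only the cases $l = i$ and $l = j$ require separate treatment.

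For parts (1) and (2), I would observe that every numerator in Equations \eqref{eqn:partial i dot VW}--\eqref{eqn:partial j norm V} is a linear combination of products of at most three of the quantities $z_i, z_j, z_k$, $\langle X_i, V \rangle$, $\langle X_i, W \rangle$, $\langle V, V \rangle$, $\langle W, W \rangle$, $\langle V, W \rangle$, each controlled in absolute value by the Euclidean bounds $|z_l| \leq 0.8$, $\|X_i\| \leq 0.8$, and $\|V\|, \|W\| \leq 1.6$. The denominators are at most $a_{X_i}^3$, where $a_{X_i} = 1 - \|X_i\|^2 \geq 0.36$, together with at most one factor of $v$ or $w$ in the norm derivatives, both bounded below by $0.5$ by Corollary \ref{cor:tangent vector norm bounds}. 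A direct term-by-term triangle inequality then yields $|\partial_l u|, |\partial_l v|, |\partial_l w| \leq 10^3$ with substantial slack.

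For part (3), I would rewrite Equation \eqref{eqn:partial angle} in terms of the already-bounded quantities by using $u = vw \cos\theta$ and $\sqrt{v^2 w^2 - u^2} = vw\, |\sin\theta|$, obtaining
\begin{equation*}
    |\partial_l \theta|
    \;\leq\; \frac{|\partial_l v|}{v\,|\sin\theta|}
    \,+\, \frac{|\partial_l w|}{w\,|\sin\theta|}
    \,+\, \frac{|\partial_l u|}{v w\,|\sin\theta|}.
\end{equation*}
Plugging in $|\partial_l u|, |\partial_l v|, |\partial_l w| \leq 10^3$, $v, w \geq 0.5$ (Corollary \ref{cor:tangent vector norm bounds}), and $|\sin\theta| \geq 0.24$ (Corollary \ref{cor:sine angle bounds}) yields a bound of order $10^5$, comfortably below the claimed $10^7$.

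The main obstacle is purely bookkeeping: Equations \eqref{eqn:partial i dot VW} and \eqref{eqn:partial i norm V} each contain three summands with different powers of $a_{X_i}^{-1}$, and one must carefully track how many factors of $\|X_i\|$, $\|V\|$, $\|W\|$ each summand absorbs. Since the claimed bounds are quite loose -- the actual partials are only of order $1$ to $10$ -- no delicate estimation is required; any honest triangle-inequality argument that correctly accounts for the powers of $1/a_{X_i}$, $1/v$, $1/w$, and $1/|\sin\theta|$ will succeed.
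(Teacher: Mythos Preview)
Your proposal is correct and follows essentially the same approach as the paper: both arguments substitute the crude bounds of $\S$\ref{subsec:crude bounds} directly into the explicit partial-derivative formulas \eqref{eqn:partial i dot VW}--\eqref{eqn:partial angle} and estimate term by term via the triangle inequality. Your treatment of part~(3) is a slightly cleaner rearrangement---splitting $\partial_l(vw)/(vw)$ into $\partial_l v/v + \partial_l w/w$ and absorbing $|\cos\theta|\le 1$---whereas the paper keeps $\partial_l(vw)$ intact and bounds it by $2\cdot 13\cdot 10^3$ using $v,w\le 13$; either way the margin below $10^7$ is enormous.
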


\begin{proof}
    To see $|\partial_l u| \leq 10^3$, Equation \eqref{eqn:partial i dot VW} and \eqref{eqn:partial j dot VW} imply 
    \begin{equation*}
        |\partial_i u|
        \leq \frac{4 \cdot 0.8}{1 - 0.8^2} + \frac{8 \cdot 0.8 \cdot 1.6^2}{0.36^2} + \frac{4 \cdot 0.8 \cdot 1.6^4}{0.36^3}
        \leq 10^3 ; \hspace{10pt}
        |\partial_l u|
        \leq \frac{2 \cdot 0.8}{0.36} + \frac{0.8 \cdot 1.6^2}{0.36^2}
        \leq 10^3
    \end{equation*}
    for $l = j$ and $k$. Here, the denominator $1 - \langle X_i, X_i \rangle$ is bounded below by $1 - 0.8^2 = 0.36$. 

    To see $|\partial_l v| \leq 10^3$, we first observe that $(2v)^{-1} \leq (2 \cdot 0.5)^{-1} = 1$. Then, Equation \eqref{eqn:partial i norm V} and \eqref{eqn:partial j norm V} imply 
    \begin{equation*}
        |\partial_i v| 
        \leq \frac{4 \cdot 0.8}{0.36} + \frac{8 \cdot 0.8 \cdot 1.6^2}{0.36^2} + \frac{4 \cdot 0.8 \cdot 1.6^4}{0.36^3} 
        \leq 10^3 ; \hspace{10pt}
        |\partial_j v|
        \leq \frac{4 \cdot 0.8}{0.36} + \frac{2 \cdot 0.8 \cdot 1.6^2}{0.36^2} 
        \leq 10^3.
    \end{equation*}
    Moreover, we have $\partial_k v = 0$. This shows $|\partial_l v| \leq 10^3$ for all $l$. The argument for $|\partial_l w| \leq 10^3$ is symmetric, so we will omit it. 

    Finally, to see $|\partial_l \theta| \leq 10^{7}$, Equation \eqref{eqn:partial angle} implies 
    \begin{equation*}
        |\partial_l \theta|
        = \frac{\partial_l (vw) \cos \theta - \partial_l u}{vw \sin \theta}
        \leq \frac{2 \cdot 1.6 \cdot 10^3 + 10^3}{0.5^2 \cdot 0.24}
        \leq 3.2 \cdot 10^6 \leq 10^7.
    \end{equation*}
    This completes the proof.
\end{proof}

\begin{lemma}
    The following bounds on the second-order partials hold for all $l, m \in \{i, j, k\}$:
    \begin{enumerate}
        \item $|\partial_{lm} u| \leq 10^{4}$;
        \item $|\partial_{lm} v|, |\partial_{lm} w| \leq 10^{7}$.
    \end{enumerate}
\end{lemma}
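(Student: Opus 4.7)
The plan is to reduce the bound for $\partial_{lm} v$ (and $\partial_{lm} w$) to a bound on $\partial_{lm}$ of a dot-product expression that has the same algebraic shape as $u$, and then bound every such second-order partial by direct term-by-term enumeration, using the crude bounds collected in Subsection~\ref{subsec:crude bounds} together with $a_{X_i} \geq 1 - 0.8^2 = 0.36$.

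\textbf{Step 1: bounding $\partial_{lm} u$.} From Equations \eqref{eqn:partial i dot VW} and \eqref{eqn:partial j dot VW}, each first partial $\partial_l u$ is a sum of a bounded number of terms of the form $N \cdot a_{X_i}^{-k}$ with $k \in \{1,2,3\}$, where $N$ is a polynomial of low degree in the coordinates $z_\ast$ and the Euclidean dot products $\langle X_i, V\rangle$, $\langle X_i, W\rangle$, $\langle V, W\rangle$. Applying $\partial_m$ via the product rule, either $N$ is differentiated (yielding a polynomial of the same form, still bounded by an absolute constant) or $a_{X_i}^{-k}$ is differentiated, producing $2 k z_i \, a_{X_i}^{-k-1}$ when $m = i$ and zero otherwise. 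Thus every term in $\partial_{lm} u$ is a rational expression with denominator at most $a_{X_i}^{4}$ and with numerator a bounded polynomial in quantities whose absolute values are at most $|z_\ast|\leq 0.8$ and $|\langle X_i, V\rangle|, |\langle X_i, W\rangle|, |\langle V, W\rangle| \leq 1.6^{2}$. Since $a_{X_i}^{-1} \leq 1/0.36 < 3$, each term is bounded by a small constant, and summing the finitely many terms yields $|\partial_{lm} u| \leq 10^4$. The identical argument applied to $q := \langle V, V\rangle_{X_i}$ (and the analogous quantity $r := \langle W, W\rangle_{X_i}$) gives $|\partial_l q|, |\partial_l r| \leq 10^3$ and $|\partial_{lm} q|, |\partial_{lm} r| \leq 10^4$.

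\textbf{Step 2: bounding $\partial_{lm} v$ and $\partial_{lm} w$.} Since $v = \sqrt{q}$, differentiating once gives $\partial_l v = (\partial_l q)/(2v)$ and differentiating again yields
\[
\partial_m \partial_l v \;=\; \frac{\partial_m \partial_l q}{2 v} \;-\; \frac{(\partial_l q)(\partial_m q)}{4 v^{3}}.
\]
Combining $v \geq 0.5$ (Corollary~\ref{cor:tangent vector norm bounds}) with the bounds from Step~1 gives
\[
|\partial_m \partial_l v| \;\leq\; \frac{10^{4}}{2 \cdot 0.5} \;+\; \frac{10^{3} \cdot 10^{3}}{4 \cdot (0.5)^{3}} \;=\; 10^{4} + 2 \times 10^{6} \;<\; 10^{7},
\]
and the matching bound for $w$ follows from the symmetry $V \leftrightarrow W$.

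\textbf{Main obstacle.} The only non-trivial work is the bookkeeping in Step~1: each of the several summands in $\partial_i u$ and $\partial_j u$ produces a handful of new rational terms when differentiated by the product rule, and one must verify that no resulting term exceeds the target constant after accounting for the highest appearing denominator power $a_{X_i}^{-4}$ and the numerator factors. This is mechanical rather than subtle, and the very generous bound $10^{4}$ (which leaves a factor of roughly $20$ of slack over what a careful count gives) ensures that loose constant tracking already suffices.
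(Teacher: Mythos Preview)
Your proposal is correct and follows essentially the same route as the paper. For part~1 the paper does exactly what you outline: it writes out $\partial_{ii}u$, $\partial_{ij}u$, $\partial_{jk}u$ explicitly (noting $\partial_{jj}u=\partial_{kk}u=0$) and bounds term by term. For part~2 your packaging is slightly cleaner than the paper's: you bound $\partial_{lm}q$ for $q=\langle V,V\rangle_{X_i}$ and then apply the chain rule for $v=\sqrt{q}$, whereas the paper computes $\partial_{ii}v$, $\partial_{ij}v$, $\partial_{jj}v$ directly and carries the correction term $-(\partial_i v)^2/v$ by hand. Since $(\partial_l q)^2/(4v^3)=(\partial_l v)^2/v$, the two computations are literally the same decomposition, and both land on the dominant contribution $2\times10^6$.

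One caution: your slack estimate in Step~1 is far too optimistic. The paper's explicit count for $|\partial_{ii}u|$ comes to roughly $8800$, so the margin below $10^4$ is only about $13\%$, not a factor of~$20$. The worst denominator $a_{X_i}^{-4}\approx 60$ combined with numerator constants in the tens does push individual terms into the thousands. So while the bookkeeping is indeed mechanical, ``loose constant tracking'' will not suffice; you really do need to enumerate the terms with the care the paper uses (or tighten the dot-product bounds $|\langle X_i,V\rangle|\le 0.8\cdot1.6$ rather than the cruder $1.6^2$) to land safely under $10^4$.
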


\begin{proof}
    To ease notation we write
    \begin{equation*}
        a = 1 - \| X_i \|^2 ; \hspace{5pt}
        t_1 = \langle X_i, V \rangle ; \hspace{5pt}
        t_2 = \langle X_i, W \rangle ; \hspace{5pt}
        t_3 = \langle V, W \rangle ; \hspace{5pt}
        t_4 = \| V \|^2 .
    \end{equation*}
    From Cauchy-Schwarz inequality, all the $t_i$'s satisfy $|t_i| \leq 1.6^2$, and $a > 1 - 0.8^2 = 0.36$.

    We start by computing the second-order partials. 
    Equation \eqref{eqn:partial i dot VW} and \eqref{eqn:partial j dot VW} gives us 
    \begin{equation*}
        \begin{aligned}
            \partial_{ii} u
            =& \frac{2}{a} + \frac{2 (t_3 - t_1 - t_2 + 8 z_i^2 - 4 z_i z_j - 4 z_i z_k + z_j z_k)}{a^2} \\
            &+ \frac{4(t_1 t_2 + 2 t_1 z_iz_k - 4t_1z_i^2 + 2 t_2 z_iz_j - 4t_2z_i^2 + 2t_3 z_i^2)}{a^3} + \frac{24 t_1 t_2 z_i^2}{a^4} ;
        \end{aligned}
    \end{equation*}
    \begin{equation*}
        \partial_{ij} u
        = \frac{-1}{a} + \frac{t_2 - 4z_i^2 + 3z_iz_k}{a^2} + \frac{4 t_2 z_i^2}{a^3} ; \hspace{15pt}
        \partial_{jk} u
        = \frac{1}{a} + \frac{z_i^2}{a^2} ;
    \end{equation*}
    and $\partial_{jj} u = \partial_{kk} u = 0$. $\partial_{ik} u$ follows from symmetry. 

    For the second-order partials of $v$, we have 
    \begin{equation*}
        \partial_{ii} v
        = \frac{1}{av} + \frac{t_4 - 2t_1 + 8z_i^2 - 8z_iz_j + z_j^2}{a^2v} + \frac{2 (2t_4z_i^2 + 4t_1z_iz_j - 8t_1z_i^2 + t_1^2)}{a^3v} + \frac{12 t_1^2 z_i^2}{a^4v} - \frac{(\partial_i v)^2}{v};
    \end{equation*}
    \begin{equation*}
        \partial_{ij} v
        = \frac{-1}{av} + \frac{3z_iz_j - 4z_i^2 + t_1}{a^2v} + \frac{4t_1z_i^2}{a^3v}- \frac{\partial_i v \partial_j v}{v} ; 
        \hspace{15pt}
        \partial_{jj} v
        = \frac{- (\partial_j v)^2}{v} + \frac{1}{av} + \frac{z_i^2}{a^2v} ;
    \end{equation*}
    and all other partials vanish. 

    Next, we find bounds on all these partials. 
    The partials $|\partial_{lm} u|$ are bounded above by $|\partial_{ii} u|$, where 
    \begin{equation*}
        |\partial_{ii} u|
        \leq \frac{2}{0.36} + \frac{6 \cdot 1.6^2 + 34 \cdot 0.8^2}{0.36^2} + \frac{4 \cdot 1.6^4 + 56 \cdot 1.6^2 \cdot 0.8^2}{0.36^3} + \frac{24 \cdot 1.6^4 \cdot 0.8^2}{0.36^4} 
        \leq 10^4.
    \end{equation*}
    The partials $|\partial_{lm} v|$ are bounded above by $\partial_{ii} v$, where
    \begin{equation*}
        |\partial_{ii} v|
        \leq \frac{1}{0.36 \cdot 0.5} + \frac{3 \cdot 1.6^2 + 17 \cdot 0.8^2}{0.36^2 \cdot 0.5} + \frac{24 \cdot 1.6^2 \cdot 0.8^2 + 2 \cdot 1.6^4}{0.36^3 \cdot 0.5} + \frac{12 \cdot 1.6^4 \cdot 0.8^2}{0.36^4 \cdot 0.5} + \frac{10^6}{0.5}
        \leq 10^7.
    \end{equation*}
    The argument for $|\partial_{lm} w| \leq 10^7$ is symmetric, so we will omit it.
\end{proof}

\begin{proof}[Proof of Lemma \ref{lem:bounding second order partial}]
    From Equation \eqref{eqn:partial angle}, we compute $\partial_{lm} \theta$ as 
    \begin{equation*}
        \begin{aligned}
            \partial_{lm} \theta 
            &= \frac{\partial_{lm} (vw) \cos \theta - \partial_l (vw) \sin\theta \, \partial_m \theta - \partial_{lm} u}{(v^2w^2 - u^2)^{1/2}}
            - \frac{(\partial_l (vw) \cos \theta - \partial_l u) \, \partial_l (v^2w^2 - u^2)}{2(v^2w^2 - u^2)^{3/2}} \\
            &= \frac{\partial_{lm} (vw) \cos \theta - \partial_l (vw) \sin\theta \, \partial_m \theta - \partial_{lm} u}{vw \sin \theta}
            - \frac{(\partial_l (vw) \cos \theta - \partial_l u) \, \partial_l (v^2w^2 - u^2)}{2 v^3 w^3 \sin^3 \theta}.
        \end{aligned}
    \end{equation*}
    Then, from $|\sin\theta \geq 0.24|$, $u \leq 1.6^2$, and $v,w \in [0.5, 13]$ we know that $|\partial_{lm} \theta|$ is bounded above by 
    \begin{equation*}
        \frac{2 \cdot 10^6 + 2 \cdot 13 \cdot 10^7 + 2 \cdot 13 \cdot 10^{10} + 10^4}{0.5^2 \cdot 0.24}
        + \frac{(2 \cdot 13 \cdot 10^3 + 10^3) \cdot (4 \cdot 13 \cdot 10^3 + 2 \cdot 13^2 \cdot 10^3)}{2 \cdot 0.5^6 \cdot 0.24^3} ,
    \end{equation*}
    which is less than $10^{14}$.
    This completes the proof.
\end{proof}

\printbibliography

@article{Heawood1890,
  author  = {Heawood, P. J.},
  title   = {Map-colour theorem},
  journal = {Quarterly Journal of Pure and Applied Mathematics},
  volume  = {24},
  year    = {1890},
  pages   = {332--338}
}

@article{Ringel1955,
	author = {Ringel, G.},
	year = {1955},
	month = {08},
	day = {01},
	id = {Ringel1955},
	journal = {Mathematische Annalen},
	number = {4},
	pages = {317--326},
	title = {Wie man die geschlossenen nichtorientierbaren Fl{\"a}chen in m{\"o}glichst wenig Dreiecke zerlegen kann},
	volume = {130}
}

@article{HUNEKE1978258,
  title = {A minimum-vertex triangulation},
  journal = {Journal of Combinatorial Theory, Series B},
  volume = {24},
  number = {3},
  pages = {258-266},
  year = {1978},
  issn = {0095-8956},
  author = {Huneke, J. P.},
  abstract = {An Euler characteristic argument indicates that if K, a girth three graph, triangulates the genus two orientable surface, then K contains at least nine vertices. A proof is given that such a graph K must contain at least 10 vertices.}
}

@article{JR1980,
	author = {Jungerman, M. and Ringel, G.},
	year = {1980},
	month = {12},
	day = {01},
	id = {Jungerman1980},
	journal = {Acta Mathematica},
	number = {1},
	pages = {121--154},
	title = {Minimal triangulations on orientable surfaces},
	volume = {145}
}

@article{Brehm1981,
	author = {Brehm, U.},
	year = {1981},
	month = {03},
	day = {01},
	id = {Brehm1981},
	journal = {Geometriae Dedicata},
	number = {1},
	pages = {119--124},
	title = {Polyeder mit zehn Ecken vom Geschlecht drei},
	volume = {11}
}

@inproceedings{bokowski1987new,
  author    = {Bokowski, J. and Brehm, U.},
  title     = {A new polyhedron of genus 3 with 10 vertices},
  booktitle = {Intuitive Geometry, Internat. Conf. on Intuitive Geometry, Siófok, Hungary, 1985},
  editor    = {Böröczky, K. and Fejes Tóth, G.},
  series    = {Colloquia Mathematica Societatis János Bolyai},
  volume    = {48},
  pages     = {105--116},
  publisher = {North-Holland},
  year      = {1987}
}

@article{brehm1987maximally,
  author  = {Brehm, U.},
  title   = {A maximally symmetric polyhedron of genus 3 with 10 vertices},
  journal = {Mathematika},
  volume  = {34},
  year    = {1987},
  pages   = {237--242}
}

@article{BokowskiBrehm1989,
	abstract = {We construct a symmetric polyhedron of genus 4 in R3 with 11 vertices. This shows that for given genus g the minimal numbers of vertices of combinatorial manifolds and of polyhedra coincide in the first previously unknown case g=4 also. We show that our polyhedron has the maximal symmetry for the given genus and minimal number of vertices.},
	author = {Bokowski, J. and Brehm, U.},
	id = {Bokowski1989},
	journal = {Geometriae Dedicata},
	number = {1},
	pages = {53--64},
	title = {A polyhedron of genus 4 with minimal number of vertices and maximal symmetry},
	volume = {29},
	year = {1989},
	month = {01}
}

@article{burago1995isometric,
  author    = {Burago, Y. D. and Zalgaller, V. A.},
  title     = {Isometric Embeddings of Two Dimensional Manifolds with a Polyhedral Metric into $\mathbb{R}^3$},
  journal   = {Algebra i Analiz},
  volume    = {7},
  number    = {3},
  pages     = {76--95},
  year      = {1995},
  note      = {English translation in St. Petersburg Mathematical Journal, 7(3):369--385}
}

@article{Zalgaller2000,
	abstract = {Elementary tools are applied to describe piecewise-linear isometric embeddings of cylindrical surfaces in ℝ3. Let T2 be a flat torus, let γ⊂T2 be the shortest closed geodesic of length lo, and let k be a fixed positive integer. We assume that if l is the length of any closed geodesic on T2 which is homotopic neither to γnor to any power of γ, then l>kl0. It is shown how to embed T2 in ℝ3 if k is sufficiently large. The same problem is solved for a flat skew torus T2. It is also shown that if a knot of arbitrary type in ℝ3 is fixed and k is sufficiently large, then T2 can be isometrically embedded in ℝ3 as a tube knotted according to the type of fixed knot. Bibliography; 4 titles.},
	author = {Zalgaller, V.  A. },
	id = {Zalgaller2000},
	journal = {Journal of Mathematical Sciences},
	number = {3},
	pages = {2228--2238},
	title = {Some bendings of a long cylinder},
	volume = {100},
	year = {2000},
	month = {06}
}

@article{HougardyLutzZelke2006,
author = {Hougardy, S. and Lutz, F. H. and Zelke, M.},
year = {2006},
month = {05},
pages = {},
title = {Polyhedra of genus 3 with 10 vertices and minimal coordinates}
}

@article{HougardyLutzZelke2007,
  author       = {Hougardy, S. and Lutz, F. H. and Zelke, M.},
  title        = {Polyhedra of genus 2 with 10 vertices and minimal coordinates},
  journal      = {Electronic Geometry Models},
  number       = {2005.08.001},
  year         = {2007},
  url          = {http://www.eg-models.de/2005.08.001}
}

@Inbook{Lutz2008,
author="Lutz, F. H.",
editor="Bobenko, Alexander I.
and Sullivan, John M.
and Schr{\"o}der, Peter
and Ziegler, G{\"u}nter M.",
title="Enumeration and Random Realization of Triangulated Surfaces",
bookTitle="Discrete Differential Geometry",
year="2008",
publisher="Birkh{\"a}user Basel",
address="Basel",
pages="235--253",
abstract="We discuss different approaches for the enumeration of triangulated surfaces. In particular, we enumerate all triangulated surfaces with 9 and 10 vertices. We also show how geometric realizations of orientable surfaces with few vertices can be obtained by choosing coordinates randomly."
}

@InProceedings{BernHayes2008,
author="Bern, M.
and Hayes, B.",
editor="Laber, Eduardo Sany
and Bornstein, Claudson
and Nogueira, Loana Tito
and Faria, Luerbio",
title="Origami Embedding of Piecewise-Linear Two-Manifolds",
booktitle="LATIN 2008: Theoretical Informatics",
year="2008",
publisher="Springer Berlin Heidelberg",
address="Berlin, Heidelberg",
pages="617--629",
abstract="We show that any compact, orientable, piecewise-linear two-manifold with Euclidean metric can be realized as a flat origami, meaning a set of non-crossing polygons in Euclidean 2-space ``plus layers''. This result implies a weak form of a theorem of Burago and Zalgaller: any orientable, piecewise-linear two-manifold can be embedded into Euclidean 3-space ``nearly'' isometrically. We also correct a mistake in our previously published construction for cutting any polygon out of a folded sheet of paper with one straight cut.",
isbn="978-3-540-78773-0"
}

@book{segerman2016visualizing,
  author    = {Segerman, H.},
  title     = {Visualizing Mathematics with 3D Printing},
  publisher = {Johns Hopkins University Press},
  year      = {2016}
}

@article{DattaUpadhyay+2006+1011+1025,
title = {Degree-regular triangulations of the double-torus},
author = {B. Datta and A. K. Upadhyay},
pages = {1011--1025},
volume = {18},
number = {6},
journal = {Forum Mathematicum },
year = {2006}
}

@misc{Lutz2017_manifoldpage,
  author       = {F. H. Lutz},
  title        = {The Manifold Page},
  howpublished = {\url{https://www3.math.tu-berlin.de/IfM/Nachrufe/Frank_Lutz/stellar/}},
  year         = {2017}
}

@article{tsuboi2024origami,
  author       = {Tsuboi, T.},
  title        = {On Origami Embeddings of Flat Tori},
  journal      = {Journal of Mathematical Sciences, The University of Tokyo},
  volume       = {31},
  number       = {2},
  pages        = {105--126},
  year         = {2024}
}

@article{LazarusTallerie2024,
	abstract = {A construction due to Burago and Zalgaller (Vestnik Leningrad Univ 15:66--80, 1960; St Petersburg Math J 7(3):369--385, 1995) shows that every orientable polyhedral surface, one that is obtained by gluing Euclidean polygons, has an isometric piecewise linear embedding into Euclidean space {\$}{\$}{$\backslash$}mathbb {\{}E{\}}\^{}3{\$}{\$}. A flat torus, resulting from the identification of the opposite sides of a Euclidean parallelogram, is a simple example of polyhedral surface. The embeddings constructed according to Burago and Zalgaller may have a huge number of vertices, moreover distinct for every flat torus. Based on another construction of Zalgaller (J Math Sci 100(3):2228--2238, 2000. https://doi.org/10.1007/s10958-000-0007-3) and on recent works by Arnoux et al. (2021, in preparation), we exhibit a universal triangulation with 2434 triangles which can be embedded linearly on each triangle in order to realize the metric of any flat torus.},
	author = {Lazarus, F. and Tallerie, F.},
	id = {Lazarus2024},
	journal = {Discrete \& Computational Geometry},
	number = {1},
	pages = {278--307},
	title = {A Universal Triangulation for Flat Tori},
	volume = {71},
	year = {2024},
	month = {01}
	}

@misc{schwartz2025vertexminimalpapertori,
      title={Vertex-Minimal Paper Tori}, 
      author={R. E. Schwartz},
      year={2025},
      eprint={2507.14998},
      archivePrefix={arXiv},
      primaryClass={math.MG}
}

@misc{doyle2025collapsibilitynearuniversalityvertex,
      title={Collapsibility and Near Universality for Vertex Minimal Paper Tori}, 
      author={P. Doyle and R. E. Schwartz},
      year={2025},
      eprint={2510.12623},
      archivePrefix={arXiv},
      primaryClass={math.MG}
}

\section{Appendix} \label{sec:appendix}

\subsection{Computing the Exponential Function} \label{subsec:transcendental functions}

In this section, we explain how we obtain bounds on the values of the exponential function, a transcendental function, using floating-point arithmetic. Let $S_n(x)$ be the $n$-th partial sum of the Taylor expansion of $e^x$ centered at 0. That is, 
\begin{equation*}
    S_n(x) = \sum_{k=0}^n \frac{x^k}{k!} ; \hspace{10pt}
    e^x = \sum_{k=0}^\infty \frac{x^k}{k!}
    = \lim_{n \rightarrow \infty} S_n(x).
\end{equation*}

Write $f(x) = e^x$. 
Given $a \in \ZZ_{>0}$, on the interval $[-a, a]$ we have
\begin{equation*}
    |f^{(n+1)}(x)| 
    = |e^x| 
    = e^x 
    \leq e^a
    \leq 3^a .
\end{equation*}
It follows that for all $x \in [-a, a]$, we have 
\begin{equation} \label{eqn:bounds for exp taylor series error}
    |S_n(x) - e^x| \leq \frac{a^{n+1} 3^{a}}{(n+1)!} .
\end{equation}

We are now ready to explain the computer calculations from Corollary \ref{cor:hyprbolic triangle side length bounds}. The task is to find an interval $[a, b]$ such that $a \leq \ln x \leq b$ for some given $x \in [0.6, 6.3]$. By monotonicity of the natural log function, it suffices to check that $e^a \leq x \leq e^b$. In particular, we noticed that $e^{-2} \leq 2^{-2} = 0.25 \leq 0.6$ and $e^2 \geq 2.7^2 \geq 7.2 \geq 6.3$. This implies for all $x$ such that $e^x \in [0.6, 6.3]$, we have $x \in [-2, 2]$. Then, using Equation \eqref{eqn:bounds for exp taylor series error}, we computed with $10^4$-digit precision in mpmath that $|S_{20}(x) - e^x| \leq 10^{-10}$ for all $x \in [-2, 2]$. This justifies our calculation in Corollary \ref{cor:hyprbolic triangle side length bounds}.

In Proposition \ref{prop:hyperbolic law of cosine lipschitz} we computed $\sinh x, \cosh x, \tanh x$ for $x = 0.5$ and $x = 2.1$. Using Equation \eqref{eqn:bounds for exp taylor series error}, we see that $|S_{20}(x) - e^x| \leq 10^{-8}$ for $x \in [-3, 3]$. Denote by 
\begin{equation*}
    S(x) = \frac{S_{20}(x) - S_{20}(-x)}{2} ; \hspace{5pt} 
    C(x) = \frac{S_{20}(x) + S_{20}(-x)}{2} ; \hspace{5pt} 
    T(x) = \frac{S_{20}(x) - S_{20}(-x)}{S_{20}(x) + S_{20}(-x)} .
\end{equation*}
It follows that $|S(x) - \sinh x| \leq 10^{-8}$ and $|C(x) - \cosh x| \leq 10^{-8}$ for all $x \in [-3, 3]$. 
Also, observe that $|\sinh x| \leq \cosh x \leq 3^3 = 27$ for $x \in [-3, 3]$, and $\cosh x \geq 1$ by AM-GM inequality, we have that 
\begin{equation*}
    \begin{aligned}
        \left| T(x) - \tanh x \right|
        &= \left| \frac{S(x)}{C(x)} - \frac{\sinh x}{\cosh x} \right| 
        \leq \frac{|\cosh x| |S(x) - \sinh x| + |\sinh x| |C(x) - \cosh x|}{|C(x)| |\cosh x|} \\
        &\leq \frac{2 \cdot 27 \cdot 10^{-8}}{0.9} \leq 10^{-6}.
    \end{aligned}
\end{equation*}

We then computed $S(x)$, $C(x)$ and $T(x)$ for $x = 0.5$ and $x = 2.1$ to get 
\begin{equation*}
    \begin{matrix}
        \sinh 0.5 \in [0.521, 0.522] & \cosh 0.5 \in [1.127, 1.128] & \tanh 0.5 \in [0.462, 0.463] \\
        \sinh 2.1 \in [4.021, 4.022] & \cosh 2.1 \in [4.144, 4.145] & \tanh 2.1 \in [0.970, 0.971]
    \end{matrix}
\end{equation*}

\subsection{The Coordinates for Robust Flatness} \label{subsec:flatness coords}

Here we provide the coordinates $Y_{i, n_j}$ as promised in the proof of Proposition \ref{prop:flatness of S hat}. We do so vertex-by-vertex.

For vertex 0, we have $d_0 = 9$ and $(n_0, \ldots, n_8) = (1, 7, 8, 5, 3, 6, 4, 9, 2)$:
\begin{equation*}
    \begin{matrix}
        Y_{0,1} & 69773419400785390350719255734910 & 0 \\
        Y_{0,7} & 22673601164050404525981471083113 & 30522649530701644390945437817321 \\
        Y_{0,8} & -4502266020035690951698303492671 & 49787214971963367839502497819870 \\
        Y_{0,5} & -27890676491289921725823658542527 & 45152905884172442060397174220700 \\
        Y_{0,3} & -73673356961053011446423416051151 & 12554263676803953556985351161533 \\
        Y_{0,6} & -65716941293368369462892421315788 & -20094064505362122337999561399511 \\
        Y_{0,4} & -63585776023578037908056362846628 & -44491862575253747132630394705349 \\
        Y_{0,9} & -20681721190011172610982530346788 & -48616891910058401869453205847887 \\
        Y_{0,2} & 34828624619056461285801375727185 & -44302110482568969803994230453983 \\
    \end{matrix}
\end{equation*}

For vertex 1, we have $d_1 = 8$ and $(n_0, \ldots, n_7) = (0, 2, 4, 6, 5, 8, 3, 7)$:
\begin{equation*}
    \begin{matrix}
        Y_{1,0} & 69773419400785390350719255734910 & 0 \\
        Y_{1,2} & 60048146452422302324334406903101 & 34003898428632477407338918008929 \\
        Y_{1,4} & 684253873310370060705932765911 & 45011277063511932691880974039935 \\
        Y_{1,6} & -38797441527227489121142967415251 & 35904267659650216159151157407406 \\
        Y_{1,5} & -34418586117854334955122669640154 & -13661872610266486281752985853784 \\
        Y_{1,8} & -13188192204919717548387089195338 & -32334046201585819117670137142096 \\
        Y_{1,3} & 27532806238161586916092515324102 & -59644150643141823546483370604325 \\
        Y_{1,7} & 61207051193049841346494694029781 & -20774055887376880717112213475032
    \end{matrix}
\end{equation*}

For vertex 2, we have $d_2 = 8$ and $(n_0, \ldots, n_7) = (0, 9, 6, 3, 8, 7, 4, 1)$:
\begin{equation*}
    \begin{matrix}
        Y_{2,0} & 56353438990578855635490869491526 & 0 \\
        Y_{2,9} & 48092387991317990941147748740637 & 39405069223657519928257610119072 \\
        Y_{2,6} & 36264020972833155000907130948793 & 56108711187827723236898114831237 \\
        Y_{2,3} & -8023794381899623805048972165114 & 49766168048906812726617349210993 \\
        Y_{2,8} & -46688768526075731314833760652532 & 14579781429380556985176995990071 \\
        Y_{2,7} & -34323173836899021622431417499987 & -16589459188636414317131134542419 \\
        Y_{2,4} & 15801788573781581976615289172374 & -66413096470080882136234194648455 \\
        Y_{2,1} & 40340533719441891677134752026234 & -55988269663804427444966060569426
    \end{matrix}
\end{equation*}

For vertex 3, we have $d_3 = 8$ and $(n_0, \ldots, n_7) = (0, 5, 4, 7, 1, 8, 2, 6)$:
\begin{equation*}
    \begin{matrix}
        Y_{3,0} & 74735353497374034571180561502097 & 0 \\
        Y_{3,5} & 64951355825035318887740051750196 & 27775730204660691498207963703796 \\
        Y_{3,4} & 18203731888080621301584871054039 & 44282741941529823958117353691764 \\
        Y_{3,7} & -50270269905279614476683928736812 & 43757175563606495111050506045891 \\
        Y_{3,1} & -65679523751189539111583804355939 & -1296258117380181104756639335080 \\
        Y_{3,8} & -53135278703958021717913038077077 & -20673081744056204160401703682562 \\
        Y_{3,2} & -21947094098998016793187902172829 & -45380368213190643026673636310429 \\
        Y_{3,6} & 41407235240572972447410876062927 & -39369780957805212274982571094628
    \end{matrix}
\end{equation*}

For vertex 4, we have $d_4 = 8$ and $(n_0, \ldots, n_7) = (0, 6, 1, 2, 7, 3, 5, 9)$:
\begin{equation*}
    \begin{matrix}
        Y_{4,0} & 77605906656232659918496595614600 & 0 \\
        Y_{4,6} & 35287460699942822202627864783646 & 31260480461897652641282558745742 \\
        Y_{4,1} & -12247171417163211623784470557145 & 43318472486111465943829115437047 \\
        Y_{4,2} & -66900524329443115864714765994097 & 13591017229050724080290741729896 \\
        Y_{4,7} & -71986590683672407001678934329849 & -6598661163810768934712510249647 \\
        Y_{4,3} & -28248252874446950359042214693411 & -38657124803256481078874090619402 \\
        Y_{4,5} & 39577284522237084988918330394325 & -48198246080382034917047601058065 \\
        Y_{4,9} & 58114222516063173451633323816533 & -23550433384261927623553718279566 
    \end{matrix}
\end{equation*}

For vertex 5, we have $d_5 = 7$ and $(n_0, \ldots, n_6) = (0, 8, 1, 6, 9, 4, 3)$:
\begin{equation*}
    \begin{matrix}
        Y_{5,0} & 53072353866459763807316258600612 & 0 \\
        Y_{5,8} & 15132621870398054709768798392140 & 26932978119784418547635926675570 \\
        Y_{5,1} & -17271886847456450664911538801150 & 32756186565288176500115728267363 \\
        Y_{5,6} & -53366672941583648447918823670646 & -264475383850852598262292837658 \\
        Y_{5,9} & -39121047935469870027816023468487 & -22605253051769535765860271234150 \\
        Y_{5,4} & 507268248785703193038086141638 & -62363250831743394667763519936132 \\
        Y_{5,3} & 30653902634212146206451278379057 & -63643601918847202789375850542438 
    \end{matrix}
\end{equation*}

For vertex 6, we have $d_6 = 7$ and $(n_0, \ldots, n_6) = (0, 3, 2, 9, 5, 1, 4)$:
\begin{equation*}
    \begin{matrix}
        Y_{6,0} & 68720359438100137565590539767661 & 0 \\
        Y_{6,3} & 25390557772908761618321852158345 & 51184551955783422998211990124352 \\
        Y_{6,2} & -6813962085088593529842703558335 & 66459285348099568242306767088465 \\
        Y_{6,9} & -29304204462392389437047998194621 & 15467804412244368520337457391781 \\
        Y_{6,5} & -49711247331825269118889114937918 & -19412975475096919280111640378696 \\
        Y_{6,1} & -32118833053426954804131807462710 & -41984979082054198551559535804433 \\
        Y_{6,4} & 5307487555958890405906055277685 & -46842855348516443461783670226668
    \end{matrix}
\end{equation*}

For vertex 7, we have $d_7 = 6$ and $(n_0, \ldots, n_5) = (0, 1, 3, 4, 2, 8)$:
\begin{equation*}
    \begin{matrix}
        Y_{7,0} & 38022681706061569845515613988920 & 0 \\
        Y_{7,1} & 11818625227981235011001127193086 & 63546712043039705538025314800530 \\
        Y_{7,3} & -37070064618057322788690894186616 & 55385925638802835554058530315113 \\
        Y_{7,4} & -60394084412598094355018887265961 & 39726139195463625691762018099811 \\
        Y_{7,2} & -35291354301407287983444968408251 & -14415641851196358579951401594919 \\
        Y_{7,8} & 6448073384057528034696596840928 & -37852482326636002546581327669442
    \end{matrix}
\end{equation*}

For vertex 8, we have $d_8 = 6$ and $(n_0, \ldots, n_5) = (0, 7, 2, 3, 1, 5)$:
\begin{equation*}
    \begin{matrix}
        Y_{8,0} & 49990370812584110646810728859197 & 0 \\
        Y_{8,7} & 28931837076682989366588153660884 & 25245531723906183515103808022266 \\
        Y_{8,2} & 5341182688010834270565922246268 & 48619778902245980212782746138780 \\
        Y_{8,3} & -36229090326845443500768801226589 & 44024847141506336954558235705473 \\
        Y_{8,1} & -26802550827068733629932236460907 & -22383972537567782704648869515785 \\
        Y_{8,5} & 7930519585113553825600252467951 & -29857803237736129523910125242990
    \end{matrix}
\end{equation*}

For vertex 9, we have $d_9 = 5$ and $(n_0, \ldots, n_4) = (0, 4, 5, 6, 2)$:
\begin{equation*}
    \begin{matrix}Y_{9,0} & 52833093515103383889224641548820 & 0 \\
        Y_{9,4} & 940902316584597264618732735964 & 62697691138220619249446404872648 \\
        Y_{9,5} & -38467987934475596300054436654094 & 23699530826092470056977361403538 \\
        Y_{9,6} & -20279822767601932923431973551822 & -26205307878821013050922465367016 \\
        Y_{9,2} & 43525294101239515382128354128724 & -44398040909811691945973843164387 
    \end{matrix}
\end{equation*}

\subsection{The Coordinates for the 12-Vertex Embedding} \label{subsec:12-vertex coords}

Here we provide the coordinates of the first six vertices of $\hat S_{12}$. One can obtain the last six vertices by applying the rotation $R$ about the $z$-axis by $\pi$. For $i = 0, 1, \ldots, 5$, let $V_i$ denote the image of vertex $i$ under $\tilde S_{12}$.
\begin{equation*}
    V_0 =
    \begin{pmatrix}
        -0.40125921389358304 \\
        -0.59971067001960654 \\
        -0.21300741678578042
    \end{pmatrix}
    \hspace{10pt}
    V_1 =
    \begin{pmatrix}
        -0.74004324087313211 \\
        -0.034941867295347241 \\
        -0.17336538136629426
    \end{pmatrix}
\end{equation*}
\begin{equation*}
    V_2 =
    \begin{pmatrix}
        -0.047326192857641246 \\
        0.73907490576107349 \\
        -0.16861249423379737
    \end{pmatrix}
    \hspace{10pt}
    V_3 =
    \begin{pmatrix}
        0.56818704554415911 \\
        -0.43330477745696788 \\
        -0.008615324240392469
    \end{pmatrix}
\end{equation*}
\begin{equation*}
    V_4 =
    \begin{pmatrix}
        -0.6211718336355635 \\
        -0.03799731280539783 \\
        -0.0054573602282373575
    \end{pmatrix}
    \hspace{10pt}
    V_5 =
    \begin{pmatrix}
        -0.49006353683388598 \\
        0.16430505278256752 \\
        0.37728234459977172
    \end{pmatrix}
\end{equation*}

\end{document}